\documentclass[12pt]{article}
\usepackage{amsthm}
\usepackage{amssymb, url}
\usepackage{amsmath}
\usepackage{amsfonts}
\usepackage{graphicx}
\usepackage{xcolor}
\definecolor{darkgreen}{rgb}{0.0, 0.6, 0.0}
\usepackage{float}
\usepackage{multicol}
\usepackage{subcaption}
\usepackage{multirow}
\usepackage{geometry} 
\usepackage{comment}

\newtheorem{theorem}{Theorem}[section]
\newtheorem{lemma}[theorem]{Lemma}

\newtheorem{algorithm}[theorem]{Algorithm}

\newtheorem {remark}[theorem]{Remark}

\title{Data assimilation for slightly compressible flow}
\author{Aytekin \c{C}{\i}b{\i}k\thanks{Department of Mathematics, Gazi University, Ankara, T\"urkiye}, \and Rui Fang\thanks{Department of Mathematics, The Ohio State University, Columbus, OH, 43210, USA (fang.1211@osu.edu). Corresponding author: Rui Fang.}}

\begin{document}
\maketitle

\begin{abstract}
Continuous data assimilation (CDA) nudges observational data into governing equations to recover the underlying flow and improve predictions. Existing rigorous CDA analyses focus primarily on incompressible flows, yet no physical flow is perfectly incompressible. Approximating a slightly compressible flow with an incompressible model introduces non-negligible model errors. Data assimilation for compressible flows remains challenging due to strong nonlinearities and the presence of shocks. We design an algorithm that addresses the limitations of velocity-only nudging for slightly compressible flow. This work incorporates both velocity and pressure data from the slightly compressible flow and nudges both quantities into the incompressible Navier--Stokes equations. Our analysis shows that the model error decays exponentially in the initial error, with an asymptotic residual of order \(O(H)\), where $H$ denotes the observation resolution. The analysis also identifies a scaling for the pressure nudging parameter \(\mu_{1} = \mathcal{O}(1 / H^{2})\) that ensures effective assimilation. We validate the theoretical results through a suite of numerical experiments: a convergence study confirming optimal rates, a modified Taylor--Green vortex benchmark demonstrating synchronization of energy, enstrophy, and pressure, and an acoustic wave propagation test that isolates the role of pressure nudging and achieves a \(97.9\%\) reduction in pressure error relative to velocity-only assimilation. Together, these results provide a foundation for discrete error estimates and realistic compressible applications.
\end{abstract}

\noindent\textbf{Keywords.} 
continuous data assimilation, nudging, predictability, Navier-Stokes equations, slightly compressible flow.

\section{Introduction} Predicting the evolution of fluid flows remains a fundamental challenge due to their inherent chaotic and multi-scale nature. We consider the flow of an incompressible viscous fluid in a domain $\Omega \subset \mathbb{R}^d$, with $d=2$ or $3$, governed by the Navier--Stokes equations (NSE)
\begin{align}
u_{t}+u\cdot \nabla u-\nu \triangle u+\nabla p &= f(x),\quad
\nabla \cdot u=0,\quad \text{in }\Omega,\ 0<t\leq T, \\
u &= 0\ \text{ on }\partial \Omega,\quad u(x,0)=u_{0}(x).
\end{align}
Solutions of this nonlinear system exhibit sensitivity to initial conditions, allowing small perturbations to grow rapidly in time, leading to a finite predictability horizon \cite{Lorenz1963}.

This intrinsic limitation motivates the incorporation of observational data into the model in order to improve prediction accuracy. Data assimilation incorporates observational data into the dynamical system to produce more accurate forecasts than the model or the data alone \cite{K03}. In practice, only coarse-grained measurements of the velocity field are available, which we represent as
\begin{equation*}
u_{\text{obs}}(x,t)=I_{H}u(x,t),
\end{equation*}
where $I_H$ denotes an observation (interpolation) operator associated with spatial resolution $H$.

The modern rigorous framework for continuous data assimilation (CDA) was established by Azouani, Olson, and Titi in 2014 \cite{AOT14}, who introduced a nudged approximation system for the $2d$ incompressible NSE. Their central result is that, for suitably large nudging parameter $\chi$ and sufficiently small observation resolution $H$, the nudged solution converges exponentially to the unknown true solution, yielding an infinite predictability horizon.

This foundational work spurred many directions of development. We highlight a few key directions. First, the CDA framework was extended to more complex fluid models, including the $3d$ NSE and non‑standard formulations \cite{BP21, CGJP22}. Second, numerical analysis became a major focus, particularly the design and convergence of implicit--explicit (IMEX) finite element methods \cite{LRZ19} and the treatment of large parameter regimes \cite{DLR25}. Third, practical challenges were addressed such as simultaneous parameter recovery \cite{CHLMN21}, handling of model errors \cite{CFLS25-model-error}, and adaptive selection of nudging parameters \cite{CFLS25}. For solutions with higher regularity, improved estimates allowed less restrictive nudging parameters \cite{Garcia-Archilla_Novo_2020}. Other innovations include  modular nudging to reduce computational complexity \cite{CFL26}, local nudging strategies for global recovery \cite{BBJ21, FP26}, and nonlinear nudging terms \cite{CLT24}. Together, these works (among hundreds of others) demonstrate the versatility and growing maturity of CDA as a framework for long‑time prediction in fluid dynamics.

A unifying feature of all existing rigorous CDA analyses is the restriction to
\emph{incompressible} flow, where the divergence constraint $\nabla\cdot u=0$
eliminates pressure as an independent dynamical variable;
pressure serves only as a Lagrange multiplier, so once the velocity is corrected
by nudging, the pressure adjusts instantaneously.
There is neither a need nor a  mechanism to nudge pressure.

This picture breaks down for \emph{slightly compressible} flows at low Mach
numbers.
Under the Stokes hypothesis, the slightly compressible NSE are
\begin{align}
u_t +u \cdot \nabla u +\tfrac{1}{2} (\nabla \cdot u) u
    -\nu \Delta u -\tfrac{1}{3}\nu \nabla(\nabla \cdot u) +\nabla p
    &= f(x), \label{compressible_momentum}\\
\frac{1}{c^2}\!\left(\frac{\partial p}{\partial t} + u\cdot \nabla p\right)
    +\nabla \cdot u &= 0, \label{compressible_continuity}
\end{align}
where $c$ is the isentropic speed of sound ($c^2 = K/\rho$ for liquids,
$c^2=\gamma RT$ for an ideal gas, where 
$K$ is the bulk modulus, $\rho$ the density, $\gamma$ the heat capacity ratio, $R$ the gas constant, and $T$ the temperature).

The continuity equation couples pressure and velocity through a time derivative $\frac{\partial p}{\partial t}$, nonlinear transport term $u\cdot \nabla p$, and $\nabla \cdot u$, giving
pressure its own dynamics. The sound speed $c$ (via the factor $1/c^{2}$) controls the compressibility of the system and the time scale of pressure dynamics. Acoustic waves propagate at finite speed $c$; in the limit, $c\to\infty$, the
incompressible formulation is recovered.

In practice, no physical flow is perfectly incompressible \cite{oran2002fluid}. Approximating a slightly compressible flow with an incompressible model introduces model errors. Traditional intuition suggests that small model errors should lead to proportionally small effects on a solution. But Birkhoff in 1950 \cite{Birkhoff_50} pointed out that this is a fallacious argument, stating that ``the possibility that arbitrarily small causes can produce finite effects in fluid mechanics has been clearly recognized..." Indeed, our compressible flow examples show that the model error between slightly compressible and incompressible dynamics cannot be dismissed, precisely because small compressibility can lead to a finite and significant error due to the system's strong nonlinearity.

Data assimilation for compressible flows is known to be challenging due to strong nonlinearities and the presence of shocks, which can lead to instability and spurious oscillations in standard assimilation methods \cite{zhou2026neural}. Thus, the goal in CDA is to nudge the incompressible model with compressible observations, thereby reducing model error and obtaining a consistent approximation of the target dynamics. In this setting, velocity nudging alone cannot adequately control the pressure field: forcing the nudged velocity $v$ to match the observation $u$ through the momentum equation, while leaving the pressure equation uncorrected, produces an inconsistent velocity--pressure pair that generates spurious acoustic waves via the divergence term
$\nabla\cdot v$ in \eqref{compressible_continuity}.
\subsection{Algorithm}
Motivated by the limitations of velocity-only nudging, we propose a CDA algorithm that incorporates \emph{both velocity and pressure}. The algorithm assimilates data from a slightly compressible reference flow, stated in (\ref{compressible_momentum})-(\ref{compressible_continuity}), into an incompressible Navier--Stokes model by simultaneously correcting the velocity and pressure fields toward the observations.
The assimilated system reads
\begin{align}
v_t +v \cdot \nabla v -\nu \Delta v +\nabla q
    - \chi I_H(u-v) &= f(x), \label{momentum_eqn_v1}\\
\nabla \cdot v - \mu_1 I_H(p-q) - \mu_2(I_H(q)-q) &= 0.
    \label{continuity_eqn_v1}
\end{align}
Here, $\chi$ controls the strength of velocity nudging, enforcing alignment of the velocity field with observations. The parameter $\mu_1$ governs the assimilation of pressure data by coupling the modeled pressure with observed pressure through the interpolation operator. In contrast, $\mu_2$ acts as a regularization parameter that suppresses unresolved (fine-scale) components of the pressure field by enforcing consistency between $q$ and its interpolant $I_H(q)$. When $\mu_1=\mu_2$, the nudged continuity equation is reduced to $\nabla \cdot v - \mu_1( I_H(p)-q) = 0$.

The analysis is based on a condition on $H$ and $\chi$ \cite{CFLS25}. We select $H$ and $\chi$ so that the following conditions hold:
\begin{eqnarray}\label{condi_3d}
\chi-\frac{27}{16}\frac{C_2^4}{\nu^3}\|\nabla v\|_{L^{\infty
}(0,T;L^{2}(\Omega ))}^{4} &\geq &\alpha \chi >0, \\
\nu -2\left( C_{1}H\right)^{2}\chi  &>&0, \label{h_condi}
\end{eqnarray}
for some fixed $\alpha \in (0,1)$.

We prove an error estimate (Theorem~\ref{error_finite}) showing exponential decay
of both the velocity and pressure errors under suitable conditions on $H$, $\chi$,
and the pressure nudging parameters $\mu_1$, $\mu_2$.
The key new analytical step is bounding the pressure related terms and acoustic transport terms; the optimal choice $\mu_1=\mathcal{O}(H^{-2})$ is identified from the analysis and confirmed numerically.

We conduct a suite of numerical experiments to validate the theory: a convergence study, a modified Taylor--Green vortex experiment, and an acoustic wave propagation test. In the tests, we use the second-order backward differentiation formula (BDF2) together with IMEX time discretization and Taylor-Hood ($\mathbb{P}_2 -\mathbb{P}_1$) finite element spaces for velocity and pressure. First, we confirm second-order convergence in both
velocity and pressure in time and space with a manufactured solution.
A modified Taylor--Green vortex benchmark \cite{taylor1937mechanism}, commonly used for compressible flows, shows that
energy, enstrophy, and pressure of the nudged system synchronize with the
compressible reference solution.
An acoustic wave propagation test isolates the necessity of pressure nudging:
velocity-only nudging ($\mu_1=\mu_2=0$) reduces the $L^2$ pressure error by
only $6\%$ relative to the free run with no assimilation, while full
pressure--velocity nudging achieves $97.9\%$ reduction, consistent
with the theoretical prediction.
This quantitative result reinforces a principle present in the earliest
meteorological nudging work: different state variables carry
fundamentally different information, and each must be corrected to obtain an accurate forecast \cite{SS90}.

The remainder of the paper is organized as follows: Section~2 derives the slightly compressible NSE from the general compressible
equations under the Stokes hypothesis.
Section~3 states and proves the continuous error estimates.
Section~4 presents the BDF2/IMEX finite element scheme and numerical experiments.
 
\subsection{Notations and Preliminaries}
We denote the $L^{2}(\Omega )$ norm and inner product by $\Vert \cdot \Vert $ and $(\cdot , \cdot)$, respectively. By $\Vert \cdot \Vert_{L^{p}}$ we indicate the $L^{p}(\Omega )$ norm. 

The solution spaces $X$ for the velocity and $Q$ for the pressure are defined as:
\begin{equation*}
\begin{aligned}
    &X:=(H_0^1(\Omega))^d=\{ v\in (L^2(\Omega))^d: \nabla  v\in (L^2(\Omega))^{d \times d}\ \text{and}\  v=0\ \text{on}\ \partial\Omega\},\\
    &Q:=L^2_0(\Omega)=\{q\in L^2(\Omega): \int_\Omega q\ d x=0\}.
\end{aligned}
\end{equation*}

The finite element method for this problem involves picking finite element spaces \cite{L08}, $X^h\subset X$ and $Q^h\subset Q$. We assume that $(X^h, Q^h)$ are conforming. We assume $X^h$ has approximation properties, for $u\in (H^{m+1}(\Omega))^d\ \cap\ (H_0^1(\Omega))^d$ and $p\in H^{m}(\Omega)$,
\begin{equation}\label{prop}
\begin{aligned}
\inf_{v^h\in X^h}\{\|u-v^h\|+h\|\nabla(u-v^h)\|\}&\leq Ch^{m+1}|u|_{m+1},
\\\inf_{q^h\in Q^h}\|p-q^h\|&\leq Ch^{m}|p|_{m},
\end{aligned}
\end{equation}
\begin{equation}\label{infsup}
\inf_{q^h\in Q^h}\sup_{ v^h\in X^h}\frac{(q^h,\nabla\cdot  v^h)}{\|q^h\|\|\nabla  v^h\|}\geq \beta^h>0.
\end{equation}
We also use the following inequalities. The first estimate of this type was derived in 1958 by Nash \cite{N58}. A summary of the improvements to some constants is mentioned in \cite{CFL26}.

\begin{lemma}\label{norms_property} For any vector function $u:{{{{\mathbb{R}}}}}%
^{d}\rightarrow {{{{\mathbb{R}}}}}^{d}$ with compact support and with the
indicated $L^{p}$ norms finite, 
\begin{align*}
\Vert u\Vert _{L^{4}({{{{\mathbb{R}}}}}^{2})}& \leq 2^{1/4}\Vert u\Vert^{1/2}\Vert \nabla u\Vert^{1/2}, \\
\Vert u\Vert_{L^{4}({{{{\mathbb{R}}}}}^{3})}& \leq \left(\frac{4}{3\sqrt{3}}\right)^{3/4}\Vert
u\Vert ^{1/4}\Vert \nabla u\Vert ^{3/4}, \\
\Vert u\Vert _{L^{6}({{{{\mathbb{R}}}}}^{3})}& \leq \frac{2}{\sqrt{3}}\Vert
\nabla u\Vert.
\end{align*}%
\end{lemma}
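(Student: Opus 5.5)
The three inequalities are the Ladyzhenskaya-type Gagliardo--Nirenberg inequalities in dimensions two and three together with the Sobolev inequality. I would prove them by the classical one-dimensional slicing argument, done on each scalar component, and then collect the components.

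The first step is to reduce to scalar functions. For $u=(u_1,\dots,u_d)$ we have $|u|^2=\sum_i u_i^2$, and the gradient norm satisfies $|\nabla |u||\le |\nabla u|$ almost everywhere. It therefore suffices to prove each inequality for the nonnegative scalar function $w=|u|$, which has compact support and lies in $W^{1,2}$. By density it is enough to treat $w\in C_c^\infty$; for $|u|$ itself one mollifies, since the inequality is preserved in the limit.

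For $d=2$:
\begin{itemize}
\item Write $w^2(x_1,x_2)=\int_{-\infty}^{x_1}2w\,\partial_1 w\,ds$.
\item Averaging the representations from the left and from the right gives
\[
\sup_{x_1} w^2 \le \int_{\mathbb{R}} |w|\,|\partial_1 w|\,dx_1,
\]
and similarly in $x_2$.
\item Hence
\[
\int w^4\le \Big(\int\sup_{x_1}w^2\,dx_2\Big)\Big(\int\sup_{x_2}w^2\,dx_1\Big)\le \|w\|^2\,\|\partial_1w\|\,\|\partial_2w\|.
\]
\item By AM--GM, $\|\partial_1 w\|\|\partial_2 w\|\le\tfrac12\|\nabla w\|^2$, so $\|w\|_{L^4}^4\le \tfrac12\|w\|^2\|\nabla w\|^2$. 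This gives a constant $2^{-1/4}$, which is better than the stated $2^{1/4}$, so the stated bound follows a fortiori.
\end{itemize}

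For $d=3$:
\begin{itemize}
\item First prove the $L^6$ Sobolev bound. The Gagliardo--Nirenberg--Sobolev argument via the Loomis--Whitney inequality gives $\|g\|_{L^{3/2}}\le \prod_i\|\partial_i g\|_{L^1}^{1/3}$ up to a factor $1/2$.
\item Apply this to $g=w^4$. Then use H\"older's inequality, $\|\partial_i w^4\|_{L^1}\le 4\|w\|_{L^6}^3\|\partial_i w\|$, and AM--GM.
\item Absorbing gives $\|w\|_{L^6}\le C\|\nabla w\|$. The sharp constant $2/\sqrt3$ is not the optimal Talenti constant, which is about $0.58$, but it is what this elementary route can produce. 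So I expect a careful accounting of the $1/2$ factors to yield $2/\sqrt3$ or better.
\item The $L^4$ bound in three dimensions then follows by interpolation, $\|w\|_{L^4}\le\|w\|^{1/4}\|w\|_{L^6}^{3/4}$, since $\tfrac14=\tfrac{1/4}{2}+\tfrac{3/4}{6}$. This gives the constant $(2/\sqrt3)^{3/4}$. Because $4/(3\sqrt3)<2/\sqrt3$, this route falls short of the stated constant $(4/(3\sqrt3))^{3/4}$.
\item To reach the stated constant I would instead slice directly: bound $\int w^4$ by combining the two-dimensional estimate in $(x_1,x_2)$ with an $x_3$-supremum estimate. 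This gives roughly $\|w\|_{L^4}^4\le C\|w\|\,\|\nabla w\|^3$, with the constant optimized by a weighted AM--GM over the three partial derivatives.
\end{itemize}

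The main obstacle is matching the exact constants, especially $(4/(3\sqrt3))^{3/4}$ in three dimensions. The qualitative inequalities are standard. For the constants I would track the anisotropic products $\prod_i\|\partial_i w\|$ carefully and use the elementary maximization $a b^{1/2}c^{1/2}\le\cdots$ with $a^2+b^2+c^2=\|\nabla w\|^2$. If the bookkeeping fails to reproduce the stated values exactly, I would fall back on the summary of constants cited in \cite{CFL26} and on Nash \cite{N58}.
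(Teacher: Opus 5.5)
The paper does not prove this lemma. It only cites Nash \cite{N58} and the summary of constants in \cite{CFL26}. Your slicing/Gagliardo--Nirenberg route is therefore a genuinely different, self-contained argument, and it is sound.

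\begin{itemize}
\item Your two-dimensional computation is complete. It gives the constant $2^{-1/4}$, which is stronger than the stated $2^{1/4}$.
\item For $L^6$ in three dimensions no further accounting is needed. The two-sided bound $|g(x)|\le\tfrac12\int_{\mathbb{R}}|\partial_i g|\,dx_i$ gives $\|g\|_{L^{3/2}}\le\tfrac12\prod_i\|\partial_i g\|_{L^1}^{1/3}$. With $g=w^4$ and $\|\partial_i w^4\|_{L^1}\le 4\|w\|_{L^6}^3\|\partial_i w\|$ you get $\|w\|_{L^6}\le 2\prod_i\|\partial_i w\|^{1/3}\le\tfrac{2}{\sqrt3}\|\nabla w\|$. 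This is exactly the stated constant, not merely ``$2/\sqrt3$ or better''.
\end{itemize}

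The one step you left open is the three-dimensional $L^4$ bound. It closes with your own slicing idea, but the quantity to maximize is the degree-three product $abc$, not $ab^{1/2}c^{1/2}$; the latter is homogeneous of degree two and cannot produce $\|\nabla w\|^3$. Write $\|\cdot\|_s$ for $L^2$ norms on a slice $\{x_3=\mathrm{const}\}$. On each slice your two-dimensional estimate (before AM--GM) gives $\int_{\mathbb{R}^2}w^4\le\|w\|_s^2\,\|\partial_1 w\|_s\,\|\partial_2 w\|_s$. Integrate in $x_3$ and apply Cauchy--Schwarz. Then use $\sup_{x_3}\|w\|_s^2\le\int_{\mathbb{R}^3}|w|\,|\partial_3 w|\le\|w\|\,\|\partial_3 w\|$ to obtain
\[
\|w\|_{L^4}^4\le\|w\|\,\|\partial_1 w\|\,\|\partial_2 w\|\,\|\partial_3 w\|\le\frac{1}{3\sqrt3}\,\|w\|\,\|\nabla w\|^3 .
\]
The second inequality is AM--GM with $a^2+b^2+c^2=\|\nabla w\|^2$. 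The resulting constant $(3\sqrt3)^{-1/4}\approx 0.66$ beats the stated $(4/(3\sqrt3))^{3/4}\approx 0.82$.

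Alternatively, the stated constant is reproduced exactly by the same $W^{1,1}$ inequality applied to $g=w^{8/3}$. One combines it with $\int w^{10/3}\le\|w\|^{2/3}\|w\|_{L^4}^{8/3}$, which gives $\|w\|_{L^4}^{4/3}\le\tfrac43\|w\|^{1/3}\prod_i\|\partial_i w\|^{1/3}$. This is likely where the paper's constant comes from.

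So no fallback on the citations is needed. Your approach buys a self-contained proof with constants at least as good as those stated, at the cost of this short bookkeeping. One minor factual slip: the sharp Aubin--Talenti constant in $\|u\|_{L^6(\mathbb{R}^3)}\le S\|\nabla u\|$ is $S=3^{-1/2}(4/\pi^2)^{1/3}\approx 0.43$, not about $0.58$. This does not affect the argument.
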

For the nonlinear term, we use the following bounds \cite{L08}, 
\begin{equation}
\begin{split}
(u\cdot \nabla v,w)& \leq C_{2}\Vert \nabla u\Vert \Vert \nabla v\Vert \Vert
\nabla w\Vert , \\
& \leq C_{2}\sqrt{\Vert u\Vert }\sqrt{\Vert \nabla u\Vert }\Vert \nabla
v\Vert \Vert \nabla w\Vert,
\end{split}%
\end{equation}%
for $u,v,w\in (H_{0}^{1}(\Omega))^d$, $d=2$ or $3$.
The standard explicitly skew-symmetrized trilinear form is denoted as $$b^*(u,v,w)=\frac{1}{2}(u\cdot \nabla v,w)-\frac{1}{2}(u\cdot \nabla w, v).$$ Further, we have
\begin{equation}
    b^{\ast }(u,v,w) =(u\cdot \nabla v,w)+\frac{1}{2}(\left( \nabla \cdot
u\right) v,w).
\end{equation}
\begin{lemma} \label{nonlinear_bound_John}
(Lemma 6.14, p. 311 - p. 312, \cite{john2016finite}) There is a $C_2<\infty$ with
\begin{equation}
\begin{split}
 b^*(u,v, w)
&\leq C_2\sqrt{\|u\|}\sqrt{\|\nabla u\|} \|\nabla v\|\|\nabla w\|,
 \end{split}
\end{equation}
for $u,v,w \in (H^1_0(\Omega))^d$, $d=2$ or $3$.
\end{lemma}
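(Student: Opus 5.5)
The plan is to reduce the trilinear bound to Hölder's inequality plus the interpolation inequalities of Lemma~\ref{norms_property}. Since $u,v,w\in (H^1_0(\Omega))^d$, we extend them by zero to $\mathbb{R}^d$; the extensions have compact support and the same $L^p$ and gradient norms, so Lemma~\ref{norms_property} applies directly.

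First we handle the term $(u\cdot\nabla v,w)$. By H\"older with exponents $(4,2,4)$,
\[
|(u\cdot\nabla v,w)|\le \|u\|_{L^4}\|\nabla v\|\|w\|_{L^4}.
\]
In $d=3$, Lemma~\ref{norms_property} gives $\|u\|_{L^4}\le C\|u\|^{1/4}\|\nabla u\|^{3/4}$. A single $1/4$ power of $\|u\|$ is weaker than the claimed $\sqrt{\|u\|}$, so we use a different split: H\"older with exponents $(3,2,6)$,
\[
|(u\cdot\nabla v,w)|\le \|u\|_{L^3}\|\nabla v\|\|w\|_{L^6}.
\]
We then interpolate $\|u\|_{L^3}\le \|u\|^{1/2}\|u\|_{L^6}^{1/2}$, which is H\"older in the Lebesgue scale since $1/3=\tfrac12\cdot\tfrac12+\tfrac12\cdot\tfrac16$. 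The Sobolev bound $\|\cdot\|_{L^6}\le \tfrac{2}{\sqrt3}\|\nabla\cdot\|$ from Lemma~\ref{norms_property} then gives $\|u\|_{L^3}\le C\|u\|^{1/2}\|\nabla u\|^{1/2}$ and $\|w\|_{L^6}\le C\|\nabla w\|$. In $d=2$ the same scheme works: the $(4,2,4)$ split with $\|u\|_{L^4}\le 2^{1/4}\|u\|^{1/2}\|\nabla u\|^{1/2}$ and $\|w\|_{L^4}\le C\|\nabla w\|$ (from the $L^4$ bound plus Poincar\'e on the bounded domain) yields the claim.

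Next we treat the second half of $b^*$, namely $(u\cdot\nabla w,v)$. This one is not of the same shape, because the gradient falls on $w$ while the undifferentiated factor is $v$. Applying the same H\"older splits to $\|u\|_{L^3}\|\nabla w\|\|v\|_{L^6}$ in 3d, or to $\|u\|_{L^4}\|\nabla w\|\|v\|_{L^4}$ in 2d, together with the same interpolation and Sobolev bounds, gives $C\sqrt{\|u\|}\sqrt{\|\nabla u\|}\,\|\nabla w\|\|\nabla v\|$. Averaging the two halves with the factors $\tfrac12$ then yields the lemma with $C_2$ the maximum of the constants, which depends on $\Omega$ only through the Poincar\'e constant in 2d.

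The only delicate point is choosing the H\"older exponents so that $u$ carries exactly half a derivative. The $L^3$ interpolation is the key step in 3d, since the $L^4$ route loses. Everything else is routine.
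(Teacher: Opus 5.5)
Your argument is correct and follows the standard route behind the cited result; the paper gives no proof of its own, only the reference to John's Lemma~6.14. That route is to keep the two skew-symmetric halves $\tfrac12(u\cdot\nabla v,w)-\tfrac12(u\cdot\nabla w,v)$ so that $u$ is never differentiated, apply H\"older with exponents $(3,2,6)$ in 3d or $(4,2,4)$ in 2d, and then use $\|u\|_{L^3}\le\|u\|^{1/2}\|u\|_{L^6}^{1/2}$ together with the Sobolev, Ladyzhenskaya and Poincar\'e bounds. Both the compact support of the zero extension and the 2d Poincar\'e step rely on $\Omega$ being bounded, which the paper assumes implicitly.
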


\textbf{A1 Assumption on $I_H$}: $I_{H}$ is an $L^{2}$ projection satisfying, for
all $\phi \in \left( H_{0}^{1}(\Omega )\right) ^{d},$
\begin{equation*}
\begin{split}
\|\phi -I_{H}\phi \|\leq C_{1}H\|\nabla \phi \|.
\end{split}
\end{equation*}

\begin{lemma}
(H\"older and Young's inequality\label{holderandyoung})
For any $\sigma>0$, $1\leq p\leq \infty$, $\frac{1}{p}+\frac{1}{q}=1$, the H\"older and Young's inequalities are as follows:
\begin{equation*}
    (u,v)\leq \|u\|_{L^p}\|v\|_{L^q},\ \ \text{and }
    (u,v)\leq \frac{\sigma}{p} \|u\|_{L^p}^p+
    \frac{\sigma^{-q/p}}{q} \|v\|_{L^q}^q.
\end{equation*}
\end{lemma}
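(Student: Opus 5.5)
The statement just above is Lemma~\ref{holderandyoung}, the H\"older and Young inequalities in the form $(u,v)\le \|u\|_{L^p}\|v\|_{L^q}$ and $(u,v)\le \frac{\sigma}{p}\|u\|_{L^p}^p+\frac{\sigma^{-q/p}}{q}\|v\|_{L^q}^q$. I would prove the two parts in that order, the first one pointwise, and then deduce the second from the first. The endpoint cases $p=1$, $q=\infty$ and $p=\infty$, $q=1$ I would treat separately at the end.

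For H\"older with $1<p<\infty$, I would start from the elementary scalar Young inequality $ab\le \frac{a^p}{p}+\frac{b^q}{q}$ for $a,b\ge 0$. It follows from concavity of the logarithm:
\[
\log\Big(\tfrac{1}{p}a^p+\tfrac{1}{q}b^q\Big)\ge \tfrac{1}{p}\log a^p+\tfrac{1}{q}\log b^q=\log(ab).
\]
If $\|u\|_{L^p}=0$ or $\|v\|_{L^q}=0$, then $uv=0$ a.e. and there is nothing to prove. Otherwise I normalize, setting $\tilde u=|u|/\|u\|_{L^p}$ and $\tilde v=|v|/\|v\|_{L^q}$. Applying the scalar inequality pointwise and integrating gives $\int\tilde u\tilde v\le \frac1p+\frac1q=1$. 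Combined with $(u,v)\le\int|u||v|$, this yields the claim.

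For the weighted Young inequality, I would first apply H\"older to get $(u,v)\le AB$ with $A=\|u\|_{L^p}$ and $B=\|v\|_{L^q}$. Then I apply the scalar Young inequality to the rescaled product $ab$ with $a=\sigma^{1/p}A$ and $b=\sigma^{-1/p}B$. This gives
\[
AB\le \frac{\sigma A^p}{p}+\frac{\sigma^{-q/p}B^q}{q},
\]
which is exactly the stated bound.

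For the endpoint $p=1$, H\"older is immediate from $|u||v|\le |u|\,\|v\|_{L^\infty}$ a.e., and $p=\infty$ is symmetric. The Young form at these endpoints must be read with a convention, since $\sigma^{-q/p}$ and $\frac{1}{q}\|v\|^q_{L^q}$ are then degenerate expressions. I expect the only delicate point in the whole argument to be stating this convention correctly, so I would restrict the Young part to $1<p<\infty$, where every term is well defined. The case $p=q=2$, $\sigma$ arbitrary, is the one used throughout the error analysis.
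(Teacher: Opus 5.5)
Your proof is correct and is the standard one. The paper gives no proof of this lemma; it quotes it as a standard preliminary, so there is no competing argument to compare with.

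Restricting the Young part to $1<p<\infty$ is a fair reading of the statement, not a gap. At $p=1$ or $p=\infty$ the stated right-hand side contains undefined expressions. As $p\to 1^{+}$ the second term tends to $0$ if $\|v\|_{L^\infty}\le\sigma$ and to $+\infty$ otherwise, so any sensible convention there just reproduces H\"older.

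One correction to your closing remark: the error analysis does not use only $p=q=2$.
The nonlinear bound $C_2\|e\|^{1/2}\|\nabla e\|^{3/2}\|\nabla u\|\le\frac{\nu}{2}\|\nabla e\|^2+\frac{27}{32}\frac{C_2^4}{\nu^3}\|\nabla u\|^4\|e\|^2$ is your weighted Young with $p=4/3$, $q=4$, $\sigma=2\nu/3$. The transport term $(u\cdot\nabla p,e_p)$ uses a three-factor $L^6$--$L^3$--$L^2$ H\"older inequality, which follows by applying your two-factor version twice. Both cases are covered by what you proved.
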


\section{Model derivation for slightly compressible flows}
In this section, we derive the NSE for slightly compressible flow. We begin with the momentum equation of the NSE \cite{LL13}:
\begin{equation}
    \begin{gathered}
    \frac{\partial u}{\partial t} + (u\cdot \nabla) u 
    = - \nabla p 
    +\nabla \cdot \big[ \nu (\nabla u + (\nabla u)^T -\frac{2}{3} (\nabla \cdot u) I) \big]\\
    +\nabla \big[ \nu_{bulk} \nabla \cdot u \big] +f,
    \end{gathered}
\end{equation}
where $\nu = \mu / \rho$ is the kinematic (shear) viscosity, 
$\nu_\text{bulk} = \mu_\text{bulk} / \rho$, 
$\mu_\text{bulk}$ is the bulk viscosity, 
$\mu$ is the dynamic (shear) viscosity, 
and $\rho$ is the fluid density. We assume the Stokes hypothesis, under which the bulk viscosity is zero, giving
\begin{equation}
    \mu_\text{bulk} = \lambda + \frac{2}{3}\mu = 0,
\end{equation}
where $\lambda$ is the second viscosity coefficient and $\mu$ is the dynamic (shear) viscosity. Thus the momentum equation reduces to
\begin{equation}
\begin{gathered}
    \frac{\partial u}{\partial t} + (u\cdot \nabla) u = - \nabla p +\nabla \cdot \big[ \nu (\nabla u + (\nabla u)^T -\frac{2}{3} (\nabla \cdot u) I) \big] +f.
    \end{gathered}
\end{equation}
Since $\nu$ is a constant,
\begin{equation}
\begin{gathered}
    \nabla \cdot (\nu \nabla u)  = \nu \Delta u,\
    \nabla \cdot \left(\nu (\nabla u)^T\right) = \nu \nabla (\nabla \cdot u),\ \text{and} \
    \nabla \cdot ( \nu (\nabla \cdot u) I ) = \nu \nabla (\nabla \cdot u). 
    \end{gathered}
\end{equation}
Thus, the momentum equation can be written as (\ref{compressible_momentum}).

Next, we derive the continuity equation for slightly compressible flow. The general continuity equation is
\begin{equation}
    \frac{\partial \rho}{\partial t} + u\cdot \nabla \rho + \rho \nabla \cdot u =0. \label{con-1}
\end{equation}
For slightly compressible flow, density is a function of pressure $\rho(p)$. Hence
\begin{equation}
    \frac{\partial \rho}{\partial t} = \frac{\partial \rho}{\partial p} \frac{\partial p}{\partial t},\ \text{and}\
    \nabla \rho = \frac{\partial \rho}{\partial p} \nabla p.
\end{equation}
Using the definition of the speed of sound $\frac{\partial \rho}{\partial p} = \frac{1}{c^2}$, (\ref{con-1}) becomes
\begin{equation}
    \frac{1}{c^2} \frac{\partial p}{\partial t} + \frac{1}{c^2} u\cdot \nabla p + \rho \nabla \cdot u =0. 
\end{equation}
Since density variations are small, we approximate $\rho \nabla \cdot u$ by a reference density $\rho_0 \nabla \cdot u$:
\begin{equation}
    \rho (p) \approx \rho_0 + \frac{\partial \rho }{\partial p} (p-p_0) = \rho_0 + \frac{1}{c^2} (p-p_0) \approx \rho_0,
\end{equation}
where the last approximation holds because $\frac{1}{c^2}(p-p_0) \ll \rho_0$. Without loss of generality, we set $\rho_0 = 1$, yielding the continuity equation (\ref{compressible_continuity}).

\section{Continuous error estimates}\label{continuous-error-estimates}
We denote the velocity error $e=u-v$, and the pressure error $e_p = p-q$. In Theorem \ref{error_finite}, we present an error estimate of our CDA method in (\ref{momentum_eqn_v1})-(\ref{continuity_eqn_v1}). While existing approaches focus solely on velocity nudging, our method additionally incorporates pressure nudging. The following bounds for pressure-related terms represent a key new result. 
\begin{theorem}\label{error_finite}
Consider the method stated in (\ref{momentum_eqn_v1}),(\ref{continuity_eqn_v1}). Assume $I_H$ satisfy A1 Assumption, conditions in (\ref{condi_3d}), and
\begin{equation*}
\begin{split}
&u_t \in L^\infty(0,T;(H^1(\Omega))^d),\quad u_{tt} \in L^\infty(0,T;(L^2(\Omega))^d),\\
&u \in L^\infty(0,T;(H^1(\Omega))^d),\quad p \in L^4(0,T;L^3(\Omega))\cap L^\infty(0,T;H^1(\Omega)),\\
&q_t \in L^2(0,T;L^2(\Omega)),\quad q \in L^2(0,T;L^2(\Omega)).
\end{split}
\end{equation*}
We have the following error estimate:
\begin{equation}
\begin{gathered}
\|e(t)\|^2 +\frac{1}{c^2} \|e_p\|^2 + \frac{\nu}{2} \int_{0}^t \exp\{-\beta\left(t-t' \right)\} \|\nabla e\|^2\, dt' \\
\leq \exp\{-\beta t \}\left(\|e(0)\|^2 +\frac{1}{c^2}\|e_p(0)\|^2\right)+ C C_1^2H^2\max_{0\leq t\leq T} \|\nabla p\|^2 \\
 + \frac{1}{c^4}\frac{1}{\mu_1}\frac{8}{3} \int_{0}^T \|\nabla u\|^4\, dt +\frac{1}{c^4}\frac{1}{\mu_1}\frac{8}{3} \int_{0}^T \|\nabla p\|^4_{L^3}\, dt \\
    + \frac{4}{c^4\mu_1}\int_{0}^T\|q_t\|^2\, dt+\frac{4|\mu_2-\mu_1 |}{\mu_1}C_1^2H^2\int_{0}^T\|\nabla q\|^2\, dt,
\label{error-estimates}
\end{gathered}
\end{equation}
where $\beta=\min[\alpha \chi, \frac{\mu_1}{4}]$.
\end{theorem}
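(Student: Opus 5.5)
We subtract the nudged system (\ref{momentum_eqn_v1})--(\ref{continuity_eqn_v1}) from the compressible system (\ref{compressible_momentum})--(\ref{compressible_continuity}). This gives error equations for $e=u-v$ and $e_p=p-q$. The momentum error equation reads
\begin{equation*}
e_t + u\cdot\nabla u - v\cdot\nabla v + \tfrac12(\nabla\cdot u)u - \tfrac{\nu}{3}\nabla(\nabla\cdot u) - \nu\Delta e + \nabla e_p + \chi I_H e = 0 .
\end{equation*}
For the continuity error equation, we rewrite $\nabla\cdot u = -\frac{1}{c^2}(p_t+u\cdot\nabla p)$ and subtract $\nabla\cdot v = \mu_1 I_H e_p + \mu_2(I_Hq-q)$. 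This gives
\begin{equation*}
\frac{1}{c^2}\partial_t e_p + \nabla\cdot e + \mu_1 I_H e_p = -\frac{1}{c^2}q_t - \frac{1}{c^2}u\cdot\nabla p - \mu_2(I_H q - q).
\end{equation*}
We test the first equation with $e$ and the second with $e_p$, then add them. The pressure couplings $(\nabla e_p,e)$ and $(\nabla\cdot e,e_p)$ cancel after integration by parts, since $e\in H^1_0$. This cancellation is the basic reason for writing the nudged continuity equation in this form, and it leaves a time derivative of $\frac12(\|e\|^2+\frac{1}{c^2}\|e_p\|^2)$.

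\textbf{Velocity block.} We treat it as in \cite{CFLS25}. We write $u\cdot\nabla u - v\cdot\nabla v = e\cdot\nabla u + v\cdot\nabla e - e\cdot\nabla e$ (or the skew-symmetric analogue via $b^*$). We bound the surviving term with Lemma \ref{nonlinear_bound_John} in its $\|e\|^{1/2}\|\nabla e\|^{3/2}\|\nabla v\|$ form and apply Young's inequality with exponents $4/3$ and $4$. This yields $\frac{\nu}{4}\|\nabla e\|^2 + \frac{27}{16}\frac{C_2^4}{\nu^3}\|\nabla v\|^4\|e\|^2$. The nudging term is handled by writing $\chi(I_He,e)=\chi\|e\|^2-\chi(e-I_He,e)$ and using A1: we have $\chi\|e-I_He\|\|e\|\le \frac{\chi}{2}\|e\|^2+\frac{\chi}{2}C_1^2H^2\|\nabla e\|^2$, and (\ref{h_condi}) absorbs the gradient part. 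Condition (\ref{condi_3d}) then gives a net damping $\alpha\chi\|e\|^2$ while keeping $\frac{\nu}{2}\|\nabla e\|^2$. The compressible momentum terms $\frac12((\nabla\cdot u)u,e)$ and $\frac{\nu}{3}(\nabla\cdot u,\nabla\cdot e)$ require a decision. I would replace $\nabla\cdot u$ by $-\frac{1}{c^2}(p_t+u\cdot\nabla p)$ and bound them by $\frac{1}{c^2}$-small quantities. To match the stated right-hand side, where the only velocity data term is $\frac{1}{c^4\mu_1}\int\|\nabla u\|^4$, I expect they are either combined into the pressure residuals or treated as model residual in the same way.

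\textbf{Pressure block.} We write $\mu_1(I_He_p,e_p)=\mu_1\|e_p\|^2-\mu_1(e_p-I_He_p,e_p)$. The defect term $e_p-I_He_p$ would need an $H^1$ bound on $e_p$, which is not available. I therefore split $e_p-I_He_p=(p-I_Hp)-(q-I_Hq)$, combine the $q$ part with $\mu_2(I_Hq-q)$, and obtain the terms $\mu_1(p-I_Hp,e_p)$ and $(\mu_2-\mu_1)(q-I_Hq,e_p)$. By A1 and Young's inequality, each is at most $\frac{\mu_1}{16}\|e_p\|^2$ plus, respectively, $C\mu_1C_1^2H^2\|\nabla p\|^2$ and $\frac{4(\mu_2-\mu_1)^2}{\mu_1}C_1^2H^2\|\nabla q\|^2$. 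The $|\mu_2-\mu_1|$ scaling in the theorem suggests a slightly different grouping for the latter; I would adjust the split to match. For the forcing terms, $\frac{1}{c^2}(q_t,e_p)\le\frac{\mu_1}{16}\|e_p\|^2+\frac{4}{c^4\mu_1}\|q_t\|^2$. For the transport term, Hölder gives $\frac{1}{c^2}(u\cdot\nabla p,e_p)\le \frac{1}{c^2}\|u\|_{L^6}\|\nabla p\|_{L^3}\|e_p\|$. I then apply Lemma \ref{norms_property}, $\|u\|_{L^6}\le\frac{2}{\sqrt3}\|\nabla u\|$, and Young's inequality twice ($ab\le\frac12a^2+\frac12b^2$) to produce the $\frac{8}{3}\frac{1}{c^4\mu_1}(\|\nabla u\|^4+\|\nabla p\|_{L^3}^4)$ terms. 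What remains is a damping of at least $\frac{\mu_1}{4}\|e_p\|^2$, which is $\frac{c^2\mu_1}{4}\cdot\frac{1}{c^2}\|e_p\|^2\ge \frac{\mu_1}{4}\frac{1}{c^2}\|e_p\|^2$ for $c\ge1$.

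\textbf{Closing the estimate.} We obtain $\frac{d}{dt}(\|e\|^2+\frac{1}{c^2}\|e_p\|^2)+\beta(\|e\|^2+\frac{1}{c^2}\|e_p\|^2)+\frac{\nu}{2}\|\nabla e\|^2\le R(t)$ with $\beta=\min[\alpha\chi,\mu_1/4]$. Multiplying by $e^{\beta t}$ and integrating over $[0,t]$ gives the stated estimate, with residual integrals bounded by their integrals over $[0,T]$; the $H^2\max\|\nabla p\|^2$ term comes from $\int_0^t e^{-\beta(t-s)}\,ds\le 1/\beta$ with $\beta\lesssim\mu_1$. \textbf{Main obstacle.} The main obstacle is the pressure defect term $\mu_1(e_p-I_He_p,e_p)$ together with the compressible terms in the momentum equation. $e_p$ has no gradient control, so it must be rerouted through $p$ and $q$ individually; this is why the stated estimate carries $\|\nabla q\|$ data and makes the $\mu_1=O(H^{-2})$ scaling visible. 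I would attempt that rerouting first.
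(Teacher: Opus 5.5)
The genuine gap is in your velocity block. Your pressure block, the cancellation of the coupling between $e_p$ and $\nabla\cdot e$, and the integrating-factor step are the same as the paper's argument: the same regrouping $\mu_1(I_Hp-q,e_p)+(\mu_2-\mu_1)((I_H-I)q,e_p)$, and the same Young bounds for the $q_t$ and $u\cdot\nabla p$ terms.

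In the velocity block you derive the momentum error equation from \eqref{momentum_eqn_v1} as written, then leave the compressible terms unresolved and hope they fold into the stated residuals. They do not.

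For the grad-div term, Young's inequality leaves $\epsilon\nu\|\nabla e\|^2+C\nu\|\nabla\cdot u\|^2=\epsilon\nu\|\nabla e\|^2+C\nu c^{-4}\|p_t+u\cdot\nabla p\|^2$. This residual has no factor $1/\mu_1$, and it involves $p_t$, which appears neither in \eqref{error-estimates} nor among the hypotheses.

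For the convection, merging $\tfrac12((\nabla\cdot u)u,e)$ with $(u\cdot\nabla u,e)$ does give $b^*(u,u,e)$. However, $(v\cdot\nabla v,e)=b^*(v,v,e)-\tfrac12((\nabla\cdot v)v,e)$, and here $\nabla\cdot v=\mu_1I_He_p+\mu_2(I_Hq-q)\neq0$. So besides $b^*(e,v,e)$ you are left with $\tfrac12((\nabla\cdot v)v,e)$, of size $\mu_1\|e_p\|\,\|v\|_{L^4}\|e\|_{L^4}$. Nothing in the hypotheses of Theorem~\ref{error_finite} lets you absorb it. In particular, your claim that a single term \emph{survives} is false for this system, since $(v\cdot\nabla e,e)=-\tfrac12((\nabla\cdot v)e,e)\neq0$. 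Separately, the identity should read $u\cdot\nabla u-v\cdot\nabla v=e\cdot\nabla u+v\cdot\nabla e$, without the $-e\cdot\nabla e$.

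The paper never meets these terms because it writes the error equation as \eqref{velocity_error}, with $b^*(u,u,w)-b^*(v,v,w)$ and $+\tfrac13(\nabla\cdot e,\nabla\cdot w)$. In effect it treats the nudged momentum equation as carrying the same skew-symmetrized convection and grad-div operator as \eqref{compressible_momentum}. The convection difference then collapses to the single term $b^*(e,u,e)$; your $b^*(e,v,e)$ works equally well and matches the $\|\nabla v\|$ in \eqref{condi_3d}. The grad-div part becomes the nonnegative $\tfrac13\|\nabla\cdot e\|^2$, which is dropped. That is the step your argument needs in order to reach \eqref{error-estimates}.

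Your unease is justified, because this form is not what \eqref{momentum_eqn_v1} literally says. But with \eqref{momentum_eqn_v1} taken at face value, \emph{treating them as model residual} cannot produce the stated right-hand side. You must either adopt the paper's form of the nudged model, or enlarge the estimate and the hypotheses (e.g.\ on $p_t$) accordingly.

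\textbf{Smaller points.}
Your $(\mu_2-\mu_1)^2/\mu_1$ is what Young's inequality actually gives. No regrouping yields the theorem's $|\mu_2-\mu_1|/\mu_1$ in general (the two agree trivially when $\mu_1=\mu_2$), so do not try to adjust the split to match.

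Your remark that converting $\tfrac{\mu_1}{4}\|e_p\|^2$ into $\beta c^{-2}\|e_p\|^2$ requires $c\ge1$ is correct; the paper uses this silently.

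The constant in front of $H^2\max\|\nabla p\|^2$ is $\mu_1/\beta$. It is bounded when $\mu_1\lesssim\alpha\chi$, not when $\beta\lesssim\mu_1$, which always holds.

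Finally, splitting off $\tfrac{\nu}{4}\|\nabla e\|^2$ produces the constant $\tfrac{27}{4}$, not $\tfrac{27}{16}$. Split off $\tfrac{\nu}{2}$ as the paper does to get $\tfrac{27}{32}$, which doubles to the $\tfrac{27}{16}$ of \eqref{condi_3d}.
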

\begin{proof}
Subtracting (\ref{compressible_momentum}) from (\ref{momentum_eqn_v1}), and taking the inner product with $w\in X$, it yields
\begin{equation}
\begin{gathered}
(e_t, w) + b^*(u,u,w)-b^*(v,v,w) + \nu (\nabla e, \nabla w) -(e_p,\nabla \cdot w) \\+ \chi (I_H(e),w) + \frac{1}{3}(\nabla \cdot e, \nabla \cdot w)=0. \label{velocity_error}
\end{gathered}
\end{equation}
We subtract (\ref{continuity_eqn_v1}) from (\ref{compressible_continuity}), and add and subtract $\frac{1}{c^2}(q_t,\lambda)$. Then we take the inner product of it with any $\lambda \in Q$. Thus we have
\begin{equation}
\begin{gathered}
\frac{1}{c^2}(e_{p,t},\lambda) + \frac{1}{c^2}(q_t,\lambda) + (\nabla \cdot e, \lambda)\\
+ \mu_1(I_H  (p-q),\lambda)+\mu_2(I_H  (q)-q,\lambda)+ \frac{1}{c^2}(u\cdot \nabla p, \lambda)=0.\label{pressure_error}
\end{gathered}
\end{equation}
We choose $w=e$ and $\lambda =e_p$, add (\ref{velocity_error}) and (\ref{pressure_error}). We obtain
\begin{equation}
\begin{gathered}
    (e_t, e) + b^*(u,u,e)-b^*(v,v,e)  +\nu (\nabla e, \nabla e) 
   + \chi (I_H(e), e) \\
   +\frac{1}{3}(\nabla \cdot e, \nabla \cdot e)+\frac{1}{c^2} (e_{p,t}, e_p) + \frac{1}{c^2} (q_t, e_p) \\
   + \mu_1(I_H  (e_p),e_p) +\mu_2(I_H  (q)-q), e_p)+ \frac{1}{c^2}(u\cdot \nabla p, e_p) =0. 
  \end{gathered}
\end{equation}
Therefore,
\begin{equation}
\begin{gathered}
    \frac{1} {2} \frac{d}{dt}\left(\|e\|^2+\frac{1}{c^2}\|e_p\|^2\right) +\nu \|\nabla e\|^2+ \chi (I_H(e),e) \\+\frac{1}{3}\|\nabla \cdot e\|^2+\mu_1(I_H (e_p), e_p) +\mu_2(I_H (q)-q), e_p)\\
    = -b^*(u,u,w)+b^*(v,v,w) +\frac{1}{c^2} (q_t, e_p)-\frac{1}{c^2} (u\cdot \nabla p,e_p). \end{gathered}\label{nudging_velocity}
\end{equation}
We bound $\chi (I_H(e),e)$ as follows:
\begin{equation}
\begin{split}\label{velocity_nudging_bound}
    \chi (I_H(e),e) &= \chi (e-e+I_H(e),e) \\
    &\geq \chi \|e\|^2 - \chi \|I-I_H(e)\|\|e\|\\
    &\geq \frac{\chi}{2} \|e\|^2- \frac{1}{2}\chi C_1^2 H^2\|\nabla e\|^2. 
\end{split}
\end{equation}
The nonlinear term is treated as follows:
\begin{equation}
    b^*(u,u,w)-b^*(v,v,w) = b^*(e,u,e) + b^*(v,e,e)=b^*(e,u,e).
\end{equation}
\begin{equation}
\begin{split}
b^*(e,u,e) &\leq C_2 \|e\|^{1/2}\|\nabla e\|^{3/2}\|\nabla u\|\\
&\leq \frac{\nu}{2} \|\nabla e\|^2 +\frac{27}{32}\frac{C_2^4}{\nu^3} \|\nabla u\|^4 \|e\|^2.
\end{split}
\end{equation}

While existing approaches focus solely on velocity nudging, our method includes pressure nudging. The following bounds for pressure-related terms are key novel results. First, we rearrange $\mu_1(I_H  (e_p),e_p)+\mu_2(I_H  (q)-q,e_p)$:
\begin{equation}
\begin{gathered}
   \mu_1(I_H  (e_p),e_p)+\mu_2(I_H  (q)-q,e_p) \\
   = \mu_1 (I_H (p)-q, e_p) + (\mu_2-\mu_1) \left((I_H -I) q, e_p\right).
   \end{gathered}
\end{equation}
We handle $\mu_1 (I_H (p)-q, e_p) $ and $(\mu_2-\mu_1) \left((I_H -I) q, e_p\right)$ in the following:
\begin{equation}
\begin{split}
    \mu_1 (I_H (p)-q, e_p) = \mu_1 (e_p, e_p) - \mu_1 ((I-I_H )(p),e_p)\\
    \geq \frac{\mu_1}{2} \|e_p\|^2 -\frac{\mu_1}{2}C_1^2H^2\|\nabla p\|^2, \text{ and } 
    \end{split}
\end{equation}
\begin{equation}
    (\mu_2-\mu_1) \left((I_H-I) q, e_p \right) \leq \frac{2|\mu_2-\mu_1 |}{\mu_1}C_1^2H^2\|\nabla q\|^2 + \frac{\mu_1}{8}\|e_p\|^2.
\end{equation}
Next, we bound the term $-\frac{1}{c^2} (q_t, e_p)$:
\begin{equation}
    \begin{split}
    -\frac{1}{c^2} (q_t, e_p) \leq \frac{1}{c^2} \|q_t\| \|e_p\|\leq \frac{2}{c^4\mu_1}\|q_t\|^2 + \frac{\mu_1}{8}\|e_p\|^2.
    \end{split}  
\end{equation}
Last, we bound the term $-\frac{1}{c^2} (u\cdot \nabla p,e_p)$:
\begin{equation}
\begin{gathered}\label{nudging_pressure_2}
    -\frac{1}{c^2} (u\cdot \nabla p, e_p)\leq \frac{1}{c^2} \|u\|_{L^6} \|\nabla p\|_{L^3}\|e_p\| \\
    \leq \frac{2}{\sqrt{3}}\frac{1}{c^2}\|\nabla u\| \|\nabla p\|_{L^3} \|e_p\| \\
    \leq \frac{1}{c^4}\frac{8}{3}\frac{1}{\mu_1} \|\nabla u\|^2 \|\nabla p\|^2_{L^3} + \frac{\mu_1}{8}\|e_p\|^2\\
    \leq \frac{1}{c^4}\frac{1}{\mu_1}\frac{4}{3} \left(\|\nabla u\|^4 + \|\nabla p\|^4_{L^3}\right) + \frac{\mu_1}{8}\|e_p\|^2.
\end{gathered}
\end{equation}
Collecting the terms from (\ref{nudging_velocity}) to (\ref{nudging_pressure_2}) and multiplying them by 2, it yields
\begin{equation}
\begin{split}
\frac{d}{dt}\left(\|e\|^2+\frac{1}{c^2} \|e_p\|^2\right) + \frac{1}{2} \nu \|\nabla e\|^2+ \frac{1}{2}(\nu -2\chi C_1^2 H^2) \|\nabla e\|^2\\
+ \left(\chi - \frac{27}{16} \frac{C_2^4}{\nu^3}\|\nabla u\|^4 \right)\|e\|^2 +\frac{\mu_1}{4}\|e_p\|^2 \leq \mu_1 C_1^2H^2\|\nabla p\|^2 + \frac{4}{c^4\mu_1}\|q_t\|^2\\
\frac{4|\mu_2-\mu_1 |}{\mu_1}C_1^2H^2\|\nabla q\|^2 +\frac{1}{c^4}\frac{1}{\mu_1}\frac{8}{3} \left(\|\nabla u\|^4 + \|\nabla p\|^4_{L^3}\right).
\label{collect-terms}
\end{split}
\end{equation}
We use the condition in (\ref{condi_3d}) and (\ref{h_condi}), and multiply both sides of (\ref{collect-terms}) by the integration factor $\exp\{\beta t\}$ , where $\beta =\min\{\alpha \chi, \frac{\mu_1}{4}\}$. Integrating from $0$ to $t$, and multiplying $\exp\{-\beta t \}$ to both sides of (\ref{collect-terms}) gives
\begin{equation}
\begin{gathered}
\|e(t)\|^2 +\frac{1}{c^2} \|e_p\|^2 + \frac{\nu}{2} \int_{0}^t \exp\{-\beta\left(t-t' \right)\} \|\nabla e\|^2\, dt' \\
\leq \exp\{-\beta t \}\left(\|e(0)\|^2 +\frac{1}{c^2}\|e_p(0)\|^2\right) + \int_{0}^t \exp\{ -\beta (t-t')\} \\
\bigg[
 \mu_1 C_1^2H^2\|\nabla p\|^2 + \frac{4}{c^4\mu_1}\|q_t\|^2
+\frac{4|\mu_2-\mu_1 |}{\mu_1}C_1^2H^2\|\nabla q\|^2 \\
+\frac{1}{c^4}\frac{1}{\mu_1}\frac{8}{3} \left(\|\nabla u\|^4 + \|\nabla p\|^4_{L^3}\right) \bigg].
\end{gathered}
\end{equation}
We handle $\int_{0}^t \exp\{ -\beta (t-t')\} \mu_1 C_1^2 H^2 \|\nabla p\|^2\, dt'$ term in the following:
\begin{equation}
\begin{gathered}
    \int_{0}^t \exp\{ -\beta (t-t')\} \mu_1 C_1^2 H^2 \|\nabla p\|^2\, dt' 
\\
\leq \max_{0\leq t \leq T} C_1^2 H^2\|\nabla p\|^2 \mu_1 \exp\{-\beta t\} \int_{0}^t \exp{\beta t'}\, dt'\\
=\max_{0\leq t \leq T} C_1^2 H^2\|\nabla p\|^2 \mu_1 \exp\{-\beta t\} \frac{\exp{\beta t}-1}{\beta}\\
=\max_{0\leq t \leq T} C_1^2 H^2\|\nabla p\|^2 \mu_1\frac{1-\exp\{-\beta t\}}{\beta}\\
\leq C \max_{0\leq t \leq T} C_1^2 H^2\|\nabla p\|^2 \left( 1-\exp\{-\beta t\}\right)\\
\leq C C_1^2 H^2 \max_{0\leq t \leq T}\|\nabla p\|^2.
\end{gathered}
\end{equation}
Therefore, we have the final result in (\ref{error-estimates}).
\end{proof}

\section{Numerical tests} 
In this section, we present numerical experiments to validate our CDA method. 
All simulations are performed using a BDF2, combined with an IMEX treatment of the nonlinear term. We use public license finite element software FreeFEM \cite{Hecht12} for the tests implemented here.  
We use an inf‑sup stable Taylor–Hood finite elements ($\mathbb{P}_2$ for velocity, $\mathbb{P}_1$ for pressure) in all the tests. The nudged system is defined as follows:
\begin{algorithm}
Given initial condition $v^0_h \in X^h$, find $(v^{n+1}_h, q^{n+1}_h) \in (X^h, Q^h)$, satisfying: 
\begin{equation}
\begin{split}
(\frac{3 v^{n+1}_h - 4v^{n}_h+v^{n-1}_h}{2\Delta t}, w^h) + \nu (\nabla v^{n+1}_h, \nabla w^h) + b^*( 2v^{n}_h - v^{n-1}_h, v^{n+1}_h, w^h)\\
+ \chi (I_H (v^{n+1}_h-u(t_{n+1}), w^h)
-(q^{n+1}_h, \nabla w^h)+ (r^h, \nabla \cdot v^{n+1}_h)
\\
+ \mu_1 (I_H(p(t_{n+1})-q^{n+1}_h), r^h) -\mu_2 (I_H(q^{n+1}_h)-q^{n+1}_h, r^h) =(f^{n+1}_h, w^h),
\end{split}
\end{equation}
for all $(w^h, r^h) \in (X^h, Q^h)$.

\end{algorithm}
\subsection{Test for accuracy}
We first verify the accuracy of the continuous data assimilation method (\ref{momentum_eqn_v1})--(\ref{continuity_eqn_v1}) using a manufactured analytical solution. Consider the square domain $\Omega = (0,1)\times(0,1)$ and the time interval $[0,T]$ with $T=2$. Following \cite{CFL26}, we construct velocity and pressure fields that satisfy the slightly compressible NSE (\ref{compressible_momentum})--(\ref{compressible_continuity}). The body force $f(x,t)$ is then obtained by substituting the above expressions into the momentum equation (\ref{compressible_momentum}).

Let $h = 1/n$ denote the mesh width, where $n$ is the number of elements per side. We employ a triangular mesh with barycentric refinement across a sequence of mesh widths, where $n = 8, 16, 32, 64$. We consider the observation operator $I_H$ to be an $L^2$-projection onto $\mathbb{P}_0$ functions, satisfying A1 Assumption with $H = h_{\mathrm{true}}$.

\subsubsection{Pressure accuracy}
In this test, we construct a pressure field that satisfies $(\ref{compressible_continuity})$ while keeping the velocity simple:
\begin{equation}
\begin{gathered}
u(x,y,t) = -\epsilon e^{t}(x,y),\ \\
p(x,y,t) = c^{2}\big[2\epsilon(e^{t}-1) + \sin\big(e^{\epsilon(e^{t}-1)}x\big) + P_0\big],
\end{gathered}
\end{equation}
where $\epsilon = 0.001$, the isentropic speed of sound $c = 10$, and $P_0 = 0.001$ is a reference pressure constant. This construction exactly satisfies the continuity equation (\ref{compressible_continuity}).

The timestep size is fixed at $\Delta t = 1/64$, which is sufficiently small to make temporal errors negligible compared to spatial discretization errors. The velocity nudging parameter is set to $\chi = 100$, while the pressure nudging parameters $\mu_1,\mu_2$ are chosen according to the analysis: $\mu_1 = \mu_2 = n^2$, i.e. $\mu_1 = 64, 256, 1024, 4096$ for $n = 8, 16, 32, 64$, respectively. 

Table \ref{tab:scaled_mu} reports the velocity and pressure errors at the final time $T=2$ together with the estimated convergence rates. Theorem \ref{error_finite} suggests taking $\mu_1 =\mathcal{O}(H^2)$ to achieve optimal convergence. The velocity and pressure errors are consistent with the analysis and the pressure‑nudging contribution remains well balanced with the other error components, preserving the optimal accuracy expected from the finite element spaces.
\begin{table}[htbp]
\centering
\begin{tabular}{c c c c c}
\hline
$n$ & $\|v^h - u\|$ & Rate & $\|q^h - p\|$ & Rate \\
\hline
$8$ & $5.31352\times 10^{-4}$ & --- & $4.04032\times 10^{-2}$ & --- \\
$16$ & $1.07629\times 10^{-4}$ & $2.30$ & $1.00944\times 10^{-2}$ & $2.00$ \\
$32$ & $2.51825\times 10^{-5}$ & $2.10$ & $2.52187\times 10^{-3}$ & $2.00$ \\
$64$ & $6.33607\times 10^{-6}$ & $1.99$ & $6.30158\times 10^{-4}$ & $2.00$
\end{tabular}
\caption{(Test for accuracy) Velocity and pressure errors at the final time $T=2$ with optimally scaled pressure nudging $\mu_1=\mu_2=n^2$. Here $\chi=100$, and $\Delta t=1/64$.}
\label{tab:scaled_mu}
\end{table}
\subsubsection{Velocity accuracy}
The velocity and pressure fields are defined as follows:
\begin{equation}
\begin{gathered}
    u(x,y,t) = -\epsilon e^t (x^2,y^2),\\
    p(x,y,t) = 2c^2\epsilon e^t (x+y),
    \end{gathered}
\end{equation}
where $c$ is the isentropic speed of sound. In this test, we simplify the pressure equation of the compressible flow: we omit $u\cdot \nabla p$. We set $c=1000$, $\epsilon=1$, $\nu=1$, $T=2$. Table \ref{velocity-accuracy} shows that the velocity and pressure errors achieve the expected second-order temporal and spatial convergence rates, confirming the correctness of the implementation.
\begin{table}[htbp]
\centering
\begin{tabular}{c c c c c c c}
\hline
 $n$ & $\Delta t$ &$\|u-v^h\|$ & Rate & $\|p-q^h\|$ & Rate\\
\hline
8 & $1/8^2$  & 0.41489 &  -- &  0.243003 &--\\
16& $1/16^2$  & 0.144845  & 1.52 & 0.061354& 1.99 \\
32 & $1/32^2$ & 0.042897 & 1.76 & 0.0154438& 1.99\\
64 & $1/64^2$  & 0.0116943 &  1.88 &0.0038777 & 1.99\\
\hline
\end{tabular}
\caption{(Test for accuracy) Errors for $u$ and $p$ for different mesh resolutions $n$ with the barycenter refinement and time-step size $\Delta t$ at the final time. $\Delta t = 1/n^2$, $\chi = n^2$, and the pressure nudging parameters are $\mu_1 = \mu_2 = n^2$.}
\label{velocity-accuracy}
\end{table}

\begin{remark}
When we fix the number of mesh points per side $n$ and the nudging parameters $\chi$, $\mu_1$, and $\mu_2$, reducing the timestep size $\Delta t$ does not improve the error between $v^h$ and the reference solution $u$. This indicates that the total error is dominated by modeling or nudging effects rather than temporal discretization. However, to estimate the temporal convergence rate, one may consider successive differences between numerical solutions obtained with decreasing time steps. 
Specifically, for time steps $\Delta t$, $\tau \Delta t$, and $\tau^2 \Delta t$, 
the relation \cite{han2025turbulence,olof_convergence}:
\begin{equation}
    \frac{\|v^h_{\Delta t} - v^h_{\tau \Delta t}\|}
         {\|v^h_{\tau \Delta t} - v^h_{\tau^2 \Delta t}\|}
    = \tau^{-p} + \mathcal{O}(\Delta t)
\end{equation}
can be used to estimate the temporal convergence order $p$. For example, choosing $\tau = 0.5$ corresponds to halving the time step. A rigorous analysis of this behavior for the fully discrete scheme remains an open problem.
\end{remark}

\subsection{Modified Taylor-Green vortex experiment}
\label{subsec:GreenTaylor} The Taylor-Green vortex is an unsteady flow of a decaying vortex which has an analytical solution for incompressible flow \cite{taylor1937mechanism}. The Taylor-Green vortex is frequently used as an initial condition or benchmark in low Mach number compressible simulations \cite{lusher2021tgv,dzanic2025supersonic}. In this test, we use a modified Taylor–Green vortex as the exact initial condition for the compressible flow model (\ref{compressible_momentum})-(\ref{compressible_continuity}).

\textit{Setup}. This variant is defined on a unit square domain, $\Omega = (0,1)\times (0,1)$, with no-slip boundary conditions for simplification of the setup \cite{fang2025numerical}. The external force is set to zero. The initial velocity and pressure of the slightly compressible flow are defined as follows:
\begin{equation}
\begin{gathered}
    u(x,y) = (\sin(2 \pi x) \cos(2\pi y), \ \sin(2 \pi y) \cos(2\pi x))^\top,\\
    p(x,y) = \frac{1}{4} (\cos(4\pi x) +\cos(4\pi y).
\end{gathered}
\end{equation}
initialize the nudged system described in (\ref{momentum_eqn_v1})-(\ref{continuity_eqn_v1}) to rest at time $t=0$. We set the number of mesh points per side  $n=64$ on the unit square, then we use barycenter refinement of the mesh. We set the timestep size $\Delta t = 0.01$, the final time $T=2$, and nudging parameters $\chi=n^2$, $\mu_1 = \mu_2 =n^2$.

\textit{Flow statistics}. To evaluate the simulations, we compute the following quantities:
\begin{itemize}
\item Kinetic energy: $\frac{1}{2} \int_\Omega |\mathbf{u}|^2 \,d\mathbf{x}$.
\item Vorticity: $\omega=\frac{\partial u_2}{\partial x} - \frac{\partial u_1}{\partial y}$, where $u(x,y)=(u_1(x,y),u_2(x,y))^\top$.
\item Enstrophy: $\frac{1}{2} \int_\Omega |\omega|^2 \, d\mathbf{x}$.
\item Divergence: $ \|\nabla \cdot u\|$.
\item Relative velocity error: $\frac{\|v^h-u^h\|}{\|u^h\|}$, and relative pressure error: $\frac{\|q^h-p^h\|}{\|p^h\|}$.
\end{itemize}

In Figure \ref{fig:flow-statistics}, we compute the energy, enstrophy, and divergence as functions of time. We also monitor the pressure evolution for both the nudged system (denoted as the numerical solution) and the slightly compressible flow (reference solution). With the nudging intensity chosen as $\chi = n^2$ and $\mu_1 = \mu_2 = n^2$, as suggested by the analysis, we observe that the energy and enstrophy remain well synchronized. The pressure of the nudged system agrees closely with that of the true (slightly compressible) model. The divergence of the nudged system increases rapidly due to the data assimilation term, exhibiting an initial overshoot in which the nudged system attains a larger divergence than the slightly compressible flow. However, the discrepancy between the divergence of the nudged system and that of the reference solution decreases over time. These results indicate that nudging both pressure and velocity data into the assimilated system is effective.
\begin{figure}[htbp]
    \centering
    \begin{subfigure}[b]{0.45\textwidth}
        \centering
    \includegraphics[width=0.95\textwidth]{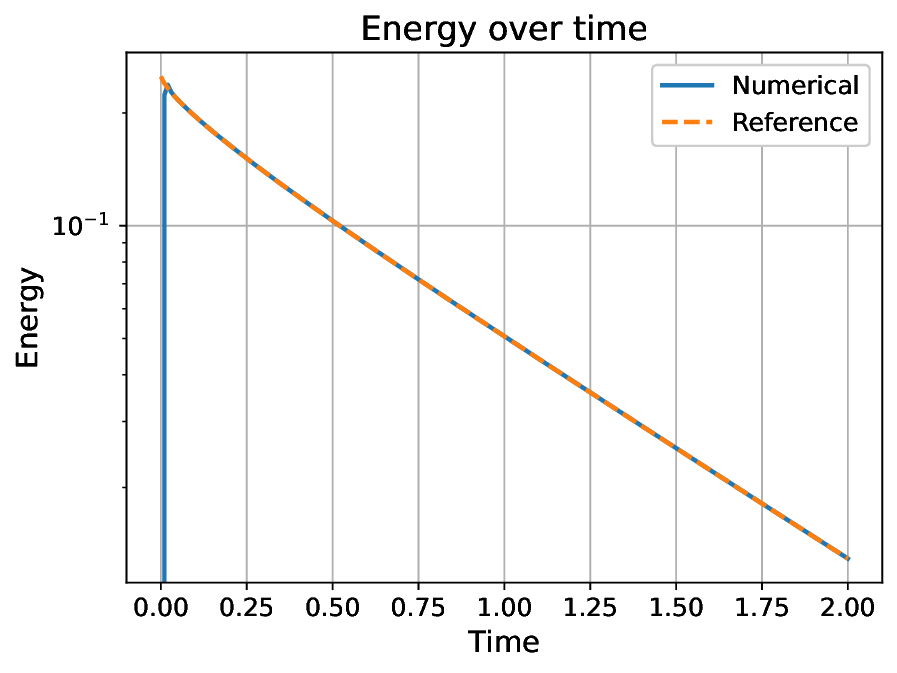}
        \caption{Energy.}
    \end{subfigure}
    \hspace{0.02\textwidth}
    \begin{subfigure}[b]{0.45\textwidth}
        \centering
        \includegraphics[width=0.95\textwidth]{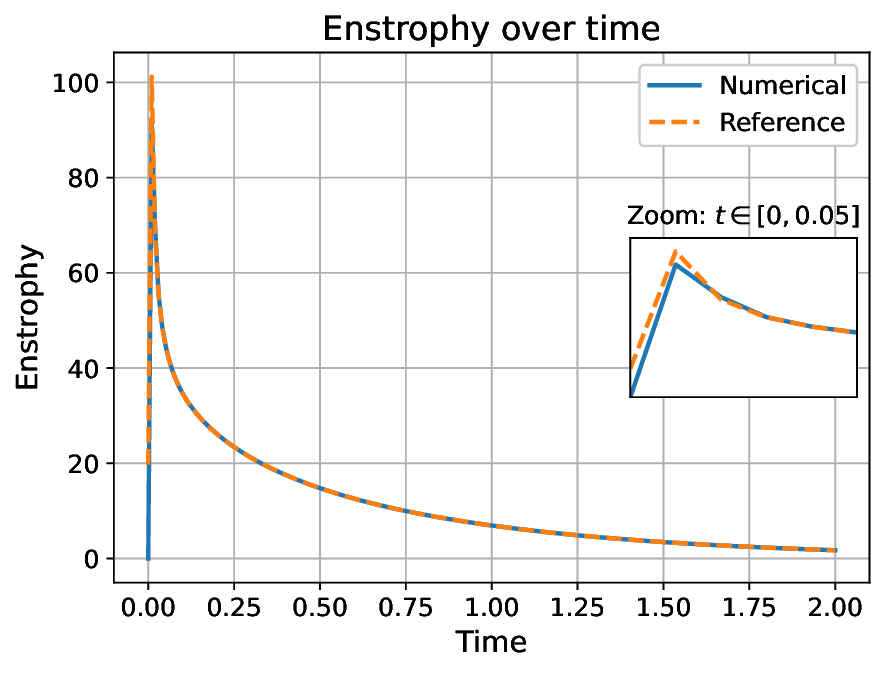}
        \caption{Enstrophy.}
    \end{subfigure}

    \begin{subfigure}[b]{0.45\textwidth}
        \centering
\includegraphics[width=0.95\textwidth]{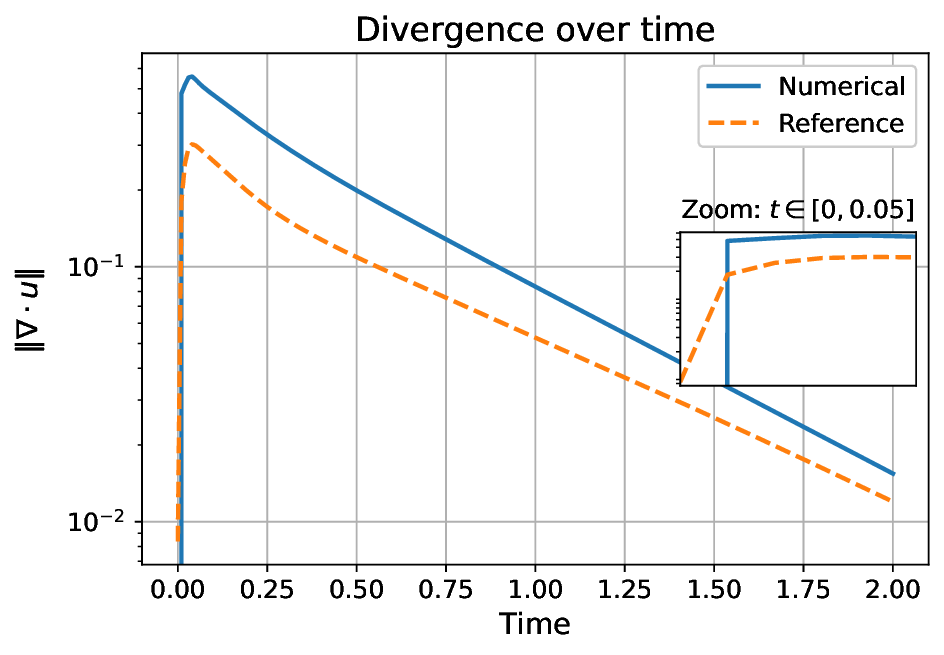}
        \caption{Divergence.}
    
    \end{subfigure}
    \hspace{0.02\textwidth}
    \begin{subfigure}[b]{0.45\textwidth}
        \centering
\includegraphics[width=0.95\textwidth]{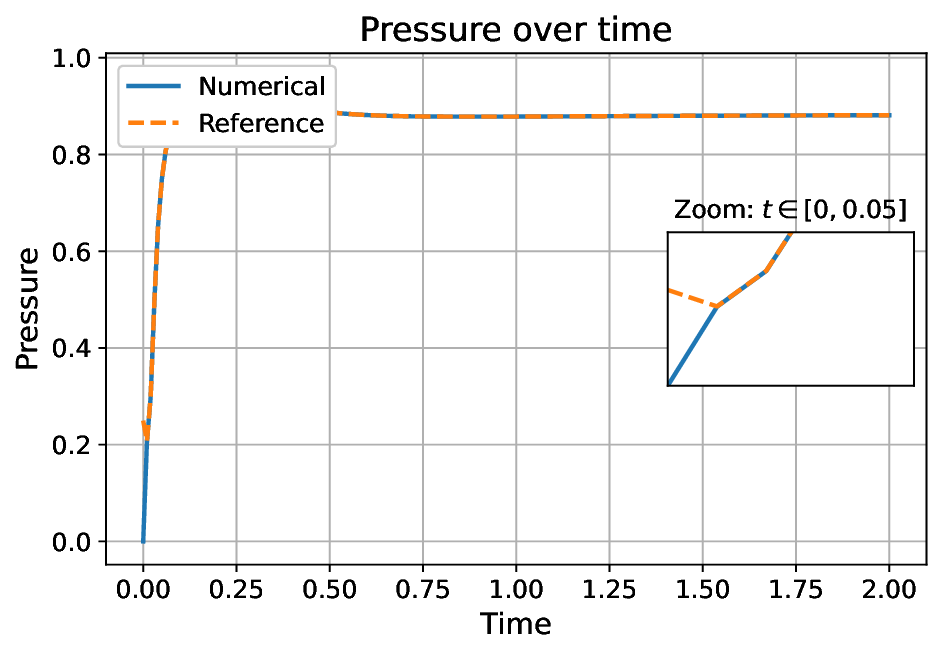}
        \caption{Pressure. }

    \end{subfigure}
    
    \caption{(Modified Taylor-Green) Time evolution of energy, enstrophy, divergence, and pressure for the nudged system and the slightly compressible reference solution. With $\chi = n^2$ and $\mu_1 = \mu_2 = n^2$, the energy and enstrophy remain synchronized, and the pressures agree closely. The divergence of the nudged system exhibits an initial overshoot but remains controlled, and its discrepancy from the reference solution decreases over time.}
    \label{fig:flow-statistics}
\end{figure}

We also present the relative velocity and pressure errors over time in Figure \ref{fig:test-2-error}. Both the relative velocity and pressure errors decrease with time, starting from $\mathcal{O}(1)$ and decreasing steadily in the intermediate regime to reach $\mathcal{O}(10^{-4})$ at the final time $t=2$. In Figure \ref{fig:velocity_vorticity}, we plot the velocity and vorticity at the final time $T=2$. The velocity fields of the nudged system and the reference solution are similar. The vorticity field of the nudged system exhibits a four-vortex structure characteristic of the modified Taylor-Green vortex, in agreement with the reference solution. However, when $\mu_1\ll n^2$, we observe that the vorticity field does not align well with the reference solution, further validating our analysis.
\begin{figure}[htbp]
    \centering    \includegraphics[width=0.5\linewidth]{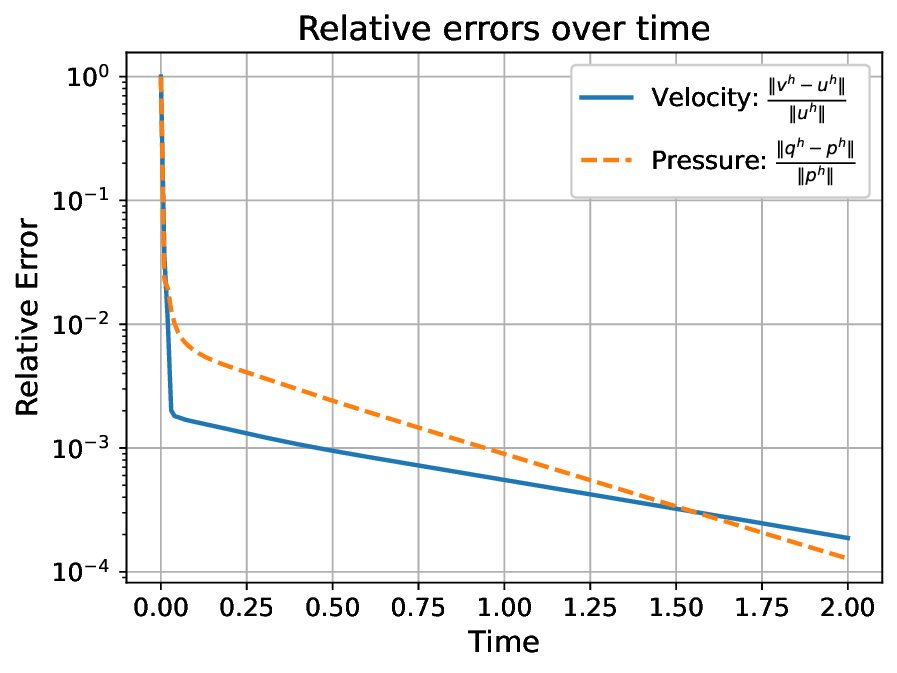}
    \caption{(Modified Taylor-Green) Relative velocity and pressure errors over time, showing decay from $\mathcal{O}(1)$ to $\mathcal{O}(10^{-4})$.}
    \label{fig:test-2-error}
\end{figure}

\begin{figure}[htbp]
    \centering
    \begin{subfigure}[b]{0.45\textwidth}
        \centering
    \includegraphics[width=0.95\textwidth]{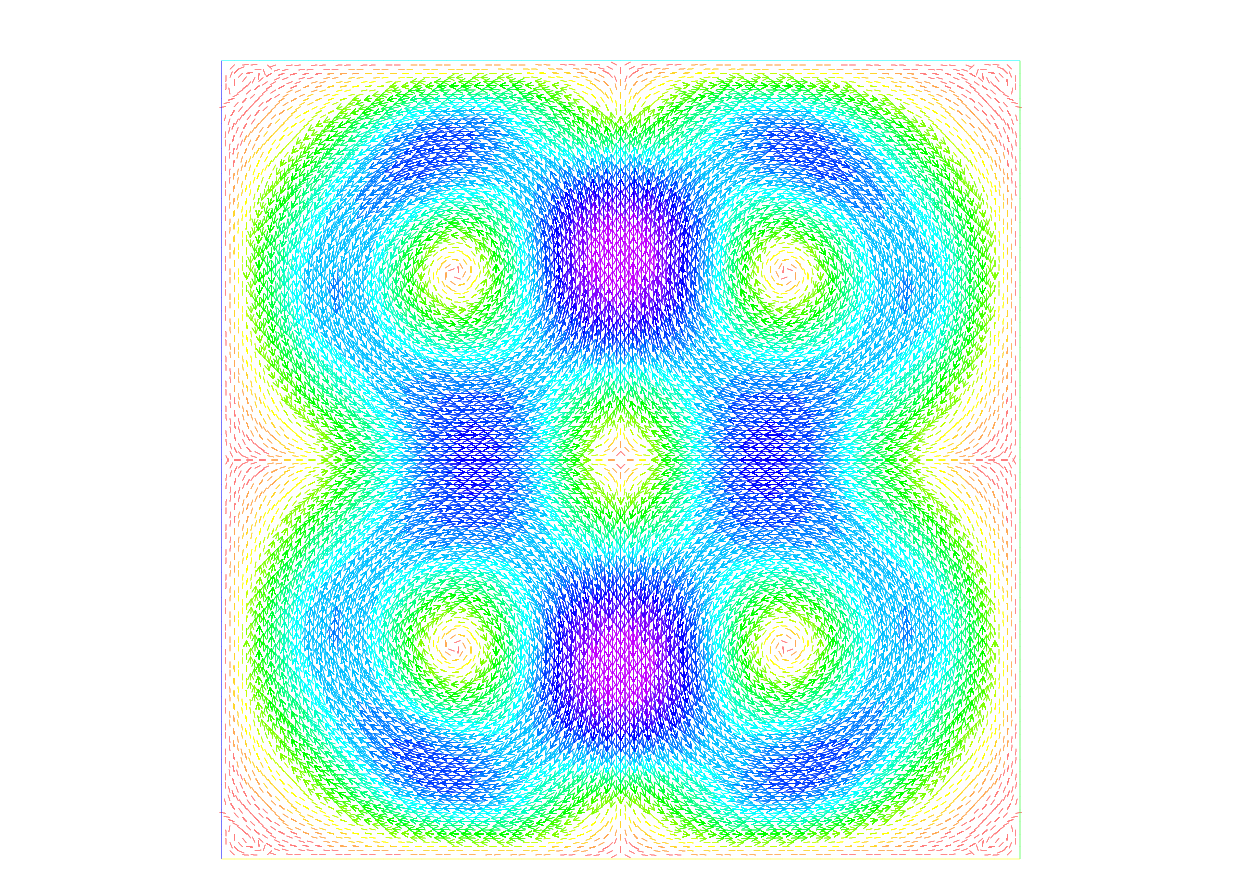}
        \caption{Numerical velocity $v^h$ at $t=2$.}
    \end{subfigure}
    \begin{subfigure}[b]{0.45\textwidth}
        \centering
        \includegraphics[width=0.95\textwidth]{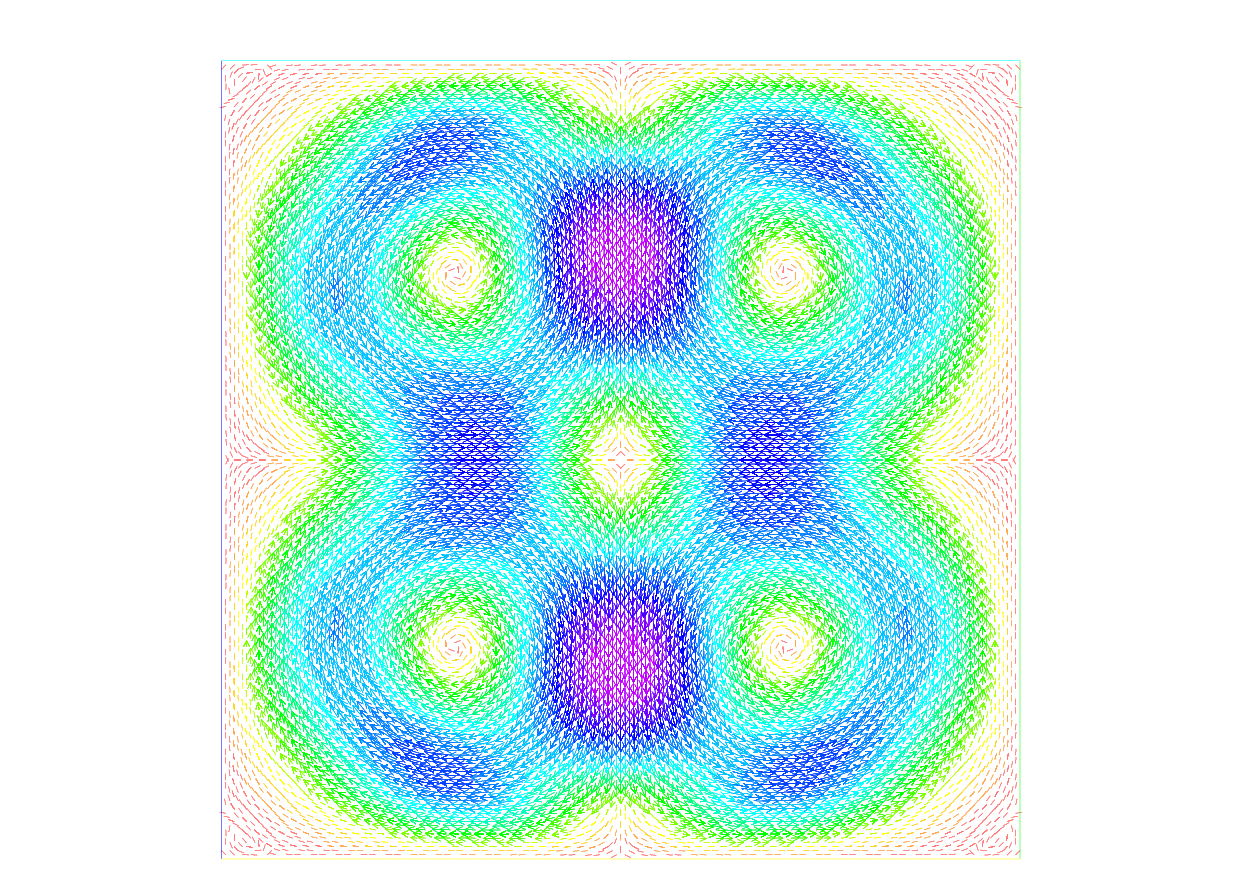}
        \caption{Reference velocity $u^h$ at $t=2$.}
        \label{fig:vel_ref}
    \end{subfigure}
    \begin{subfigure}[b]{0.45\textwidth}
        \centering
\includegraphics[width=0.95\textwidth]{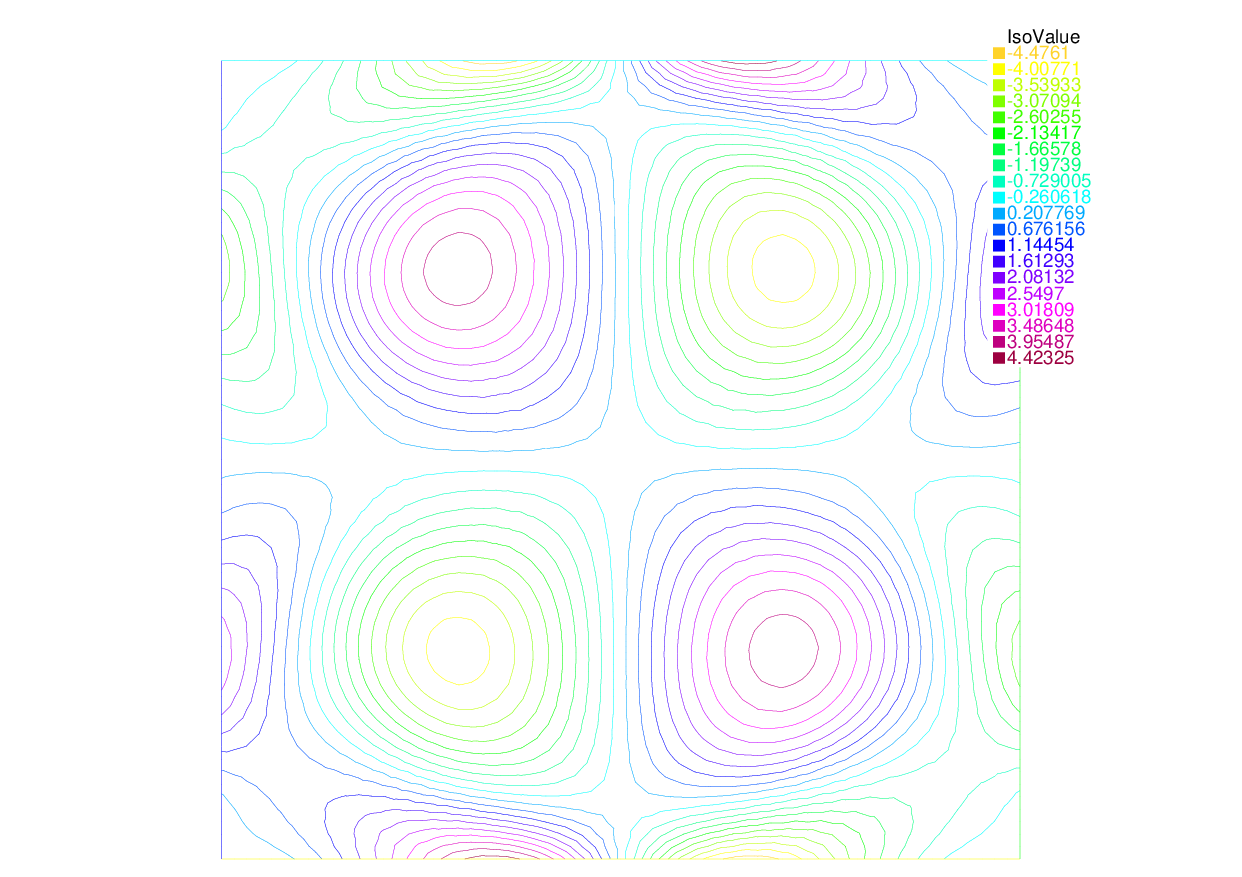}
        \caption{Numerical vorticity at $t=2$.}
        \label{fig:vort_num}
    \end{subfigure}
    \begin{subfigure}[b]{0.45\textwidth}
        \centering
        \includegraphics[width=0.95\textwidth]{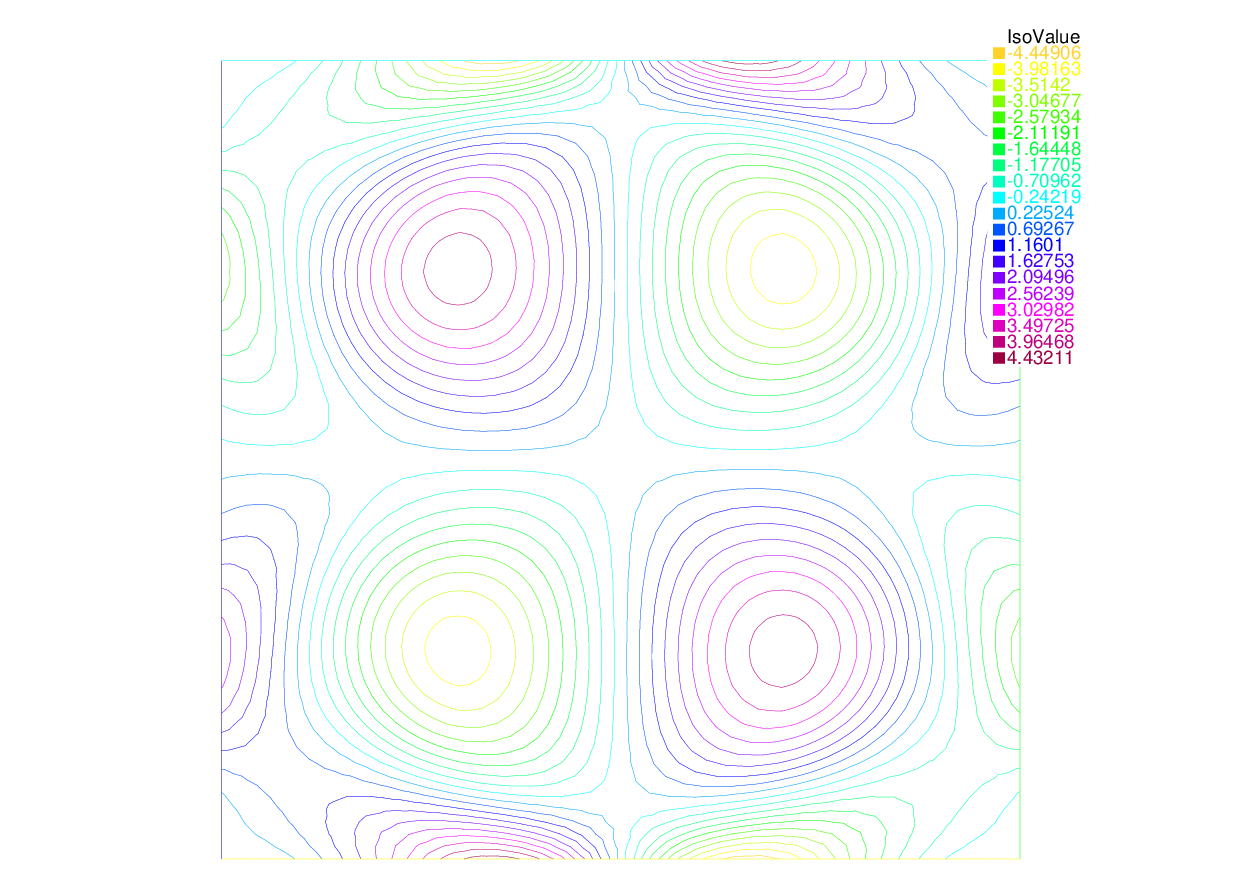}
        \caption{Reference vorticity at $t=2$. }
        \label{fig:vort_ref}
    \end{subfigure}
    \caption{(Modified Taylor-Green) Comparison of velocity and vorticity fields at $t=2$.}
    \label{fig:velocity_vorticity}
\end{figure}

\subsection{Pressure nudging in acoustic waves}
\label{subsec:acoustic}

Slightly compressible flows support acoustic wave propagation---a feature
absent from incompressible models. The initial Gaussian pressure pulse
\eqref{eq:acoustic_ic} is a standard benchmark in computational aero-acoustics,
introduced in \cite{Hardin1995} and widely used to validate compressible
solvers \cite{TamWebb1993}. This test isolates and demonstrates the \emph{necessity} of the pressure nudging parameters $\mu_1$ and $\mu_2$ for reconstructing pressure wave dynamics.

\subsubsection{Setup}
We consider $\Omega = (0,10)\times (0,10)$ with an initial Gaussian pressure pulse centered at $(x_c,y_c) = (5,5)$:
\begin{equation}\label{eq:acoustic_ic}
  p(x,y,0) = P_0 + \delta P\exp\!\left(-\frac{(x-x_c)^2+(y-y_c)^2}{2\sigma^2}\right), \qquad u(x,y,0) = 0,
\end{equation}
with $P_0 = 10^5$~Pa, $\delta P = 1$~Pa ($\delta P/P_0 = 10^{-5}$), $\sigma = 0.5$~m, $c = 1$~m/s, and $\nu = 10^{-3}$~m$^2$/s. This initial condition follows the standard form used in computational aero-acoustics benchmarks \cite{Hardin1995,TamWebb1993,Bakhvalov2025}. We integrate to $T = 3.5$~s so that the wave front remains safely inside the domain. The amplitude $\delta P = 1$~Pa keeps the acoustic Courant–Friedrichs–Lewy (CFL) number $c \Delta t/h \approx 0.64$, compatible with the implicit BDF2 scheme.

The time step is $\Delta t = 0.05$~s and the nudging parameters are
\begin{equation}\label{eq:acoustic_nudging_params}
  \chi = \frac{1}{\Delta t} = 20, \qquad \mu_1 = \mu_2 = \frac{1}{c^2\,\Delta t} = 20.
\end{equation}

The true solution is computed on a fine mesh ($n_{\mathrm{true}} = 128$, $h_{\mathrm{true}} \approx 0.078$~m); the assimilation system uses a coarser mesh ($n_{\mathrm{assim}} = 32$, $h_{\mathrm{assim}} = 0.3125$~m).

\subsubsection{Experimental cases}
All assimilation cases start from the \emph{wrong} initial condition (IC) ($p\equiv P_0$, $u\equiv 0$---no pulse), so the nudging must reconstruct the wave entirely from observations:
\begin{itemize}
  \item \textbf{TRUE}: reference slightly compressible solution \eqref{compressible_momentum}--\eqref{compressible_continuity} on the fine mesh with correct IC \eqref{eq:acoustic_ic}.
  \item \textbf{FREE}: assimilation system with $\chi = \mu_1 = \mu_2 = 0$ and wrong IC (no observations).
  \item \textbf{VEL}: velocity nudging only ($\chi = 20$, $\mu_1 = \mu_2 = 0$), wrong IC.
  \item \textbf{FULL}: velocity \emph{and} pressure nudging ($\chi = \mu_1 = \mu_2 = 20$), wrong IC.
\end{itemize}

\subsubsection{Results}
We monitor probe pressure histories $q(x_p,y_c,t)-P_0$ at $x_p \in \{7,8\}$~m along $y = y_c = 5$~m (theoretical arrival times $t_{\mathrm{arr}} = 2$~s and $3$~s, respectively) and the global $L^2$ pressure error $\|q - p_{\mathrm{true}}\|_{L^2(\Omega)}$. The wave speed is estimated from peak arrival times. Table~\ref{tab:acoustic} and Figure~\ref{fig:acoustic} summarize the results.
\begin{table}[h!]
\centering
\begin{tabular}{l c c}
\hline
Case & Final $\|q-p_{\rm true}\|_{L^2}$ & Reduction vs.\ FREE \\
\hline
FREE  & $6.077\times 10^{-1}$ & ---    \\
VEL   & $5.707\times 10^{-1}$ & $6.1\%$ \\
FULL  & $1.263\times 10^{-2}$ & $97.9\%$ \\
\hline
\end{tabular}
\caption{(Acoustic wave propagation) Final $L^2$ pressure error at $T=3.5$ s. Parameters:
$c=1$ m/s, $\delta P = 1$ Pa, $\sigma = 0.5$ m, $\chi = \mu_1 = \mu_2 = 20$,
$n_{\rm true}=128$, $n_{\rm assim}=32$, $\Delta t = 0.05$ s.}
\label{tab:acoustic}
\end{table}

\textbf{FREE} produces no wave reconstruction: the pressure remains flat at all probes for all $t\in[0,T]$.

\textbf{VEL} (velocity nudging only) captures the correct wave shape and speed at the probes, but the global $L^2$ pressure error is only $6.1\%$ better than FREE. The reason is fundamental: when $\mu_1 = \mu_2 = 0$, the assimilation continuity equation
\begin{equation}\label{eq:continuity_nudged}
   \nabla\cdot v = 0
\end{equation}
receives no direct pressure correction. Velocity nudging drives $v\to u$ through the momentum equation, but without a corresponding pressure correction, the divergence $\nabla\cdot v$ acts as a \emph{spurious acoustic source}. When $\mu_1 = \mu_2 = 0$, the assimilation continuity equation
\eqref{continuity_eqn_v1} receives no direct pressure correction,  which explains why 
the global $L^2$ pressure error of VEL is only $6.1\%$ better than FREE 
despite the probe histories looking reasonable. This is a fundamental limitation of velocity-only nudging in the slightly compressible setting.

\textbf{FULL} (velocity and pressure nudging) reduces the $L^2$ error by $97.9\%$ relative to FREE. With $\mu_1 = \mu_2 = 20$, the right-hand side of \eqref{eq:continuity_nudged} is replaced by $\mu_1 I_H(p-q) + \mu_2(I_H(q)-q)$, which directly supplies the observed pressure signal, creating an acoustic source consistent with the true wave. The probe histories are visually indistinguishable from TRUE, and the $L^2$ error decreases rapidly in the first $0.5$~s as the nudging injects the correct pressure field.

\begin{remark}
The necessity of pressure nudging is specific to the slightly compressible setting. In the incompressible limit ($c\to\infty$), the continuity equation reduces to $\nabla\cdot v = 0$ and the pressure is a Lagrange multiplier with no independent dynamics; velocity nudging alone suffices \cite{AOT14,LRZ19}. The compressibility parameter $1/c^2$ coupling pressure and velocity dynamics is precisely what makes the $\mu_1,\mu_2$ terms in \eqref{continuity_eqn_v1} necessary rather than merely beneficial.
\end{remark}

\begin{figure}[htbp]
    \centering
\includegraphics[width=0.9\textwidth]{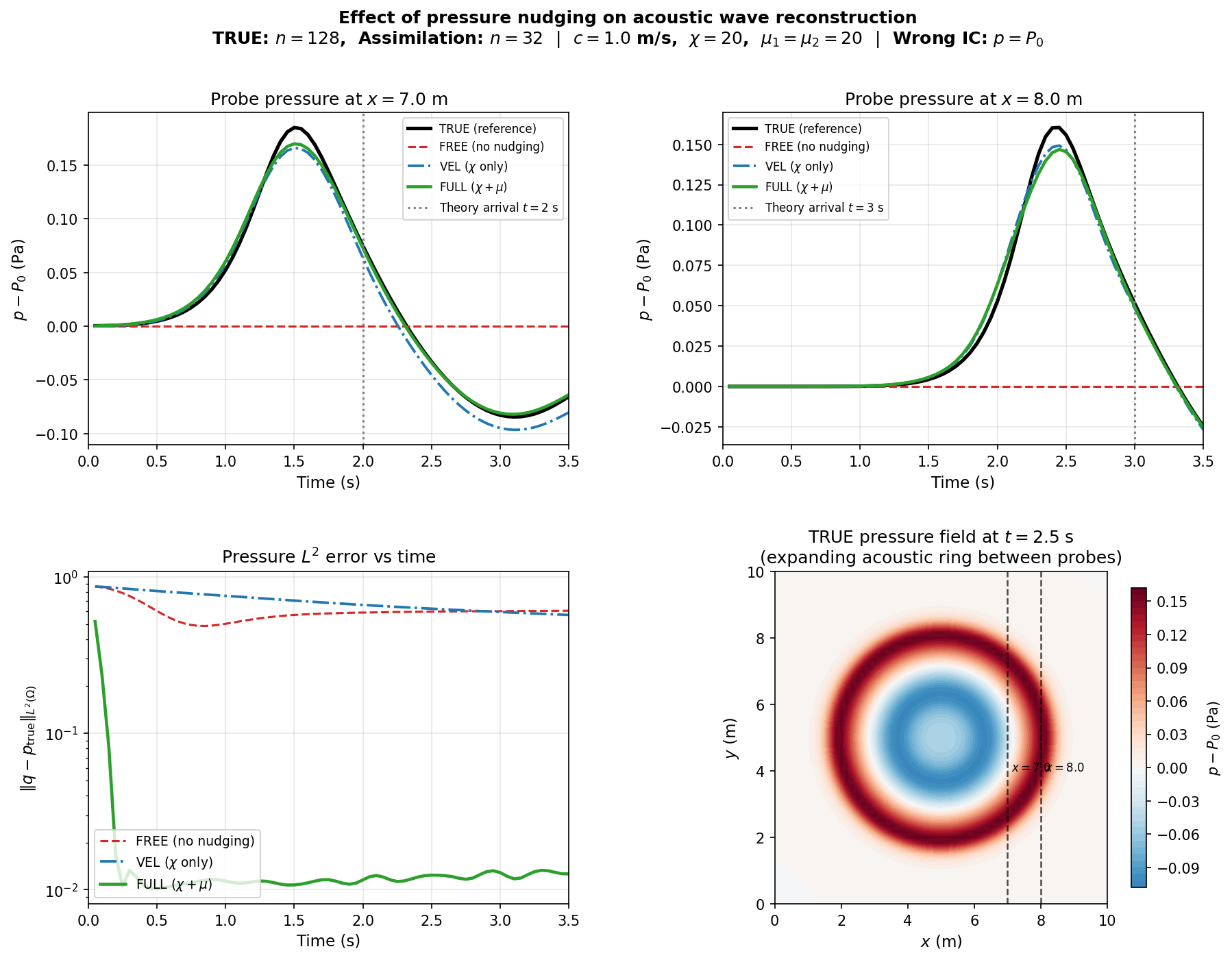}
    \caption{(Acoustic wave propagation) Acoustic wave reconstruction from wrong initial conditions ($p = P_0$, no pulse).
    \textbf{Top row}: probe pressure perturbation $q(x_p, y_c, t) - P_0$ at $x=7$~m
    (left) and $x=8$~m (right). The grey dotted vertical line marks the theoretical
    wave arrival time. FREE shows no reconstruction; VEL reconstructs the wave shape but with spurious bulk pressure errors; FULL closely tracks TRUE at both probes.
    \textbf{Bottom left}: $L^2$ pressure error over time on a logarithmic scale. FULL achieves a $97.9\%$ reduction relative to FREE at $T=3.5$~s.
    \textbf{Bottom right}: TRUE pressure perturbation field at $t=2.5$~s, showing the outward-propagating acoustic ring between the two probes.
    Parameters: $c=1$~m/s, $\delta P=1$~Pa, $\sigma=0.5$~m,
    $\chi=\mu_1=\mu_2=20$, $n_{\mathrm{true}}=128$, $n_{\mathrm{assim}}=32$,
    $\Delta t=0.05$~s.}
    \label{fig:acoustic}
\end{figure}


\section{Conclusions}
In this paper, motivated by the limitations of velocity-only nudging methods for slightly compressible flow, we design an algorithm that incorporates both velocity and pressure data from the slightly compressible flow and nudges both quantities into the assimilated incompressible NSE. Our analysis in Section \ref{continuous-error-estimates} shows that the model error (resulting from using compressible flow data to nudge an incompressible model) decays exponentially in the initial error, with an asymptotic residual of order $O(H)$. The analysis also suggests that the nudging parameter for pressure $\mu_1$ should be $O(1/H^2)$ to achieve effective assimilation. 

The analysis reveals that terms involving $\frac{4|\mu_2-\mu_1 |}{\mu_1}$ arise in the estimates, indicating sensitivity to the mismatch between the nudging parameters. In particular, the choice $\mu_1=\mu_2$ eliminates these terms and leads to a simplified setting, which we adopt in this work. A systematic study of the effects of differing values of $\mu_1$ and $\mu_2$ remains an open problem.

To further validate the effectiveness of our algorithm, we conduct a suite of numerical experiments, including a manufactured solution to test convergence rates (which confirm the expected optimal convergence), a modified Taylor–Green vortex experiment, and an acoustic wave propagation problem. The results confirm synchronization of the nudged system with slightly compressible observational data. In the modified Taylor–Green vortex experiment, additional validation is provided by flow statistics, including kinetic energy, vorticity, enstrophy, and divergence. The acoustic wave propagation problem demonstrates the necessity of pressure nudging for achieving synchronization of the pressure field.

This work provides a theoretical and numerical foundation for velocity–pressure nudging in slightly compressible flows, opening the door to further investigations of optimal pressure nudging parameter choices, error estimates for semi- and fully discrete cases, and applications in realistic compressible flows. 

\section*{Acknowledgments}
    The authors thank Professor William Layton for suggesting this problem and helpful discussions regarding this work.

\end{document}